\documentclass[11pt]{article}

\usepackage{amsmath, amssymb, amsthm}
\usepackage{mathrsfs}
\usepackage{fullpage}
\usepackage{hyperref}
\usepackage{tikz}
\usepackage{enumitem}
\usepackage[dvipsnames]{xcolor}
\usepackage{comment}
\usepackage{caption}
\numberwithin{figure}{section}
\newtheorem{definition}{Definition}[section]
\newtheorem{theorem}[definition]{Theorem}
\newtheorem{proposition}[definition]{Proposition}
\newtheorem{lemma}[definition]{Lemma}
\newtheorem{corollary}[definition]{Corollary}
\newtheorem{conjecture}[definition]{Conjecture}

\theoremstyle{definition}
\newtheorem{remark}[definition]{Remark}

\newcommand{\F}{\mathbb{F}}

\newcommand{\AR}{\operatorname{AR}}
\title{Thresholds for Tic-Tac-Toe on Finite Affine Spaces}

\author{
Luca Bastioni\thanks{University of South Florida, 4202 E Fowler Ave, Tampa, FL 33620, USA}
\and
Alessandro Giannoni\thanks{Università degli Studi di Napoli Federico II, Dipartimento di Matematica e Applicazioni}
\and
Javier Lobillo-Olmedo\footnotemark[1]
}

\date{}

\begin{document}

\maketitle

\begin{abstract}
We introduce an affine version of Tic-Tac-Toe played on the finite affine space
$\mathbb{F}_q^m$. Two players alternately claim points, and the first player to
occupy all points of an affine subspace of dimension $n$ wins. We call this the
$(m,n)_q$-game. For fixed $n$ and $q$, we study how the outcome depends on the
ambient dimension $m$.

Using strategy stealing and a blocking-set interpretation, we show that every
$(m,n)_q$-game is either a first-player win or a draw, and that the property of
being a first-player win is monotone in $m$. This yields a threshold $T(n,q)$:
the game is a draw for $m<T(n,q)$ and a first-player win for $m\ge T(n,q)$.

We prove that this threshold is finite by applying the affine/vector-space
Ramsey theorem of Graham, Leeb and Rothschild, and we obtain general lower
bounds from the Erdős-Selfridge criterion for Maker-Breaker games. In the
binary case, we give a direct Fourier-analytic argument, combined with an
inductive lifting method, which shows that
\[
T(n,2)\le 2^{n+1}.
\]
We also determine several small cases, including $T(1,q)=2$ for
$q\in\{2,3,4\}$ and $T(2,2)=4$, and we prove geometric lower bounds from
explicit pairing strategies, such as $T(n,q)\ge n+2$ for every $n\ge 2$.

Our results place affine Tic-Tac-Toe at the interface of strong positional
games, finite geometry and Ramsey theory for finite affine spaces.
\end{abstract}

\section{Introduction}

Positional games form a classical and central part of combinatorial game theory.
In their simplest form, two players alternately claim elements of a finite set,
called the \emph{board}, and the winner is determined by whether one of the
players occupies all elements of a prescribed winning set.
The games studied in this paper are strong positional games: both players try
to occupy an entire winning set, and the game may end in a draw.
This framework also includes Maker-Breaker games and has been studied
extensively by Erd\H{o}s, Selfridge, Beck, and many others; we refer to Beck's
book \cite{Beck2008} and survey article \cite{Beck2005} for general background.

Among positional games, Tic-Tac-Toe is perhaps the most familiar example.
In the classical $3\times 3$ version, the board consists of nine cells,
the winning sets are the eight rows, columns, and diagonals,
and the first player who occupies one of these sets wins.
Despite its elementary appearance, Tic-Tac-Toe already exhibits several
typical features of positional games, including strategy-stealing arguments,
threats, forks, and the distinction between winning and drawing positions.

A large literature is devoted to generalizations of Tic-Tac-Toe and more broadly
to ``$n$-in-a-row'' type games.
One natural direction is to vary the shape of the winning pattern while keeping
the game on a planar grid: for instance, in Harary's generalized Tic-Tac-Toe,
also known as \emph{animal Tic-Tac-Toe}, the goal is to complete a prescribed
polyomino rather than a straight line \cite{GardnerTTT}.
Another classical direction is to enlarge the board and the target length,
leading to the family of $(m,n,k)$-games, of which ordinary Tic-Tac-Toe and
Gomoku are standard examples; see \cite{WW1}.
A third and deeper direction is to study high-dimensional versions of the game,
where winning sets are combinatorial lines or higher-dimensional analogues.
In this setting, the Hales-Jewett theorem provides a fundamental structural
result: for any fixed number of colors and fixed alphabet size, sufficiently
high dimension forces a monochromatic combinatorial line
\cite{HalesJewett}.
This theorem has long been recognized as one of the cornerstones of
high-dimensional Tic-Tac-Toe theory, and Beck's monograph
\cite{Beck2008} develops this point of view in depth.
Playing Tic-Tac-Toe on finite affine and projective planes has also been
studied previously. In particular, Carroll and Dougherty investigated the
game on finite planes and showed that, in the affine case, the affine plane
of order $q$ is a first-player win for $q\le 4$, whereas for $q>4$ the
second player can force a draw \cite{CarrollDougherty2004}.
More recently, Danziger, Huggan, Malik, and Marbach gave a human-verifiable
explicit proof that Tic-Tac-Toe on the affine plane of order $4$ is a
first-player win \cite{DanzigerHugganMalikMarbach2022}.

In this paper we introduce and study a geometric version of Tic-Tac-Toe
played on finite affine spaces.
Fix a prime power $q$ and integers $1\le n\le m$.
The board is the affine space $\F_q^m$, and two players alternately claim
previously unclaimed points.
The first player to occupy all points of an affine subspace of dimension $n$
wins; if all points are claimed and neither player completes such a configuration,
the game is declared a draw.
We call this game $(m,n)_q$-Tic-Tac-Toe.

In this notation, the results by Carroll and Dougherty and by Danziger et al. fit naturally
with the point of view of the present paper, where the games $(2,1)_q$
coincide with Tic-Tac-Toe on affine planes.

This construction may be viewed as a geometric analogue of classical
Tic-Tac-Toe in which the winning sets are not chosen ad hoc from a visible grid,
but arise naturally from the affine geometry of $\F_q^m$.
For example, the game $(2,1)_3$ corresponds to playing on the affine plane
$\F_3^2$, where the winning sets are all affine lines.
Thus one recovers the usual $3\times 3$ Tic-Tac-Toe board, but with a richer
family of winning configurations: not only the standard eight lines of the
classical game, but all affine lines in $\F_3^2$.
More generally, the passage from rows and diagonals to affine subspaces places
the game naturally at the intersection of positional game theory, finite geometry,
Ramsey theory, and additive combinatorics.

Our main goal is to understand, for fixed $n$ and $q$, how the outcome of
$(m,n)_q$-Tic-Tac-Toe depends on the ambient dimension $m$.
More precisely, we ask whether the first player has a winning strategy or whether
the game is a draw.
A standard strategy-stealing argument shows that the second player can never
have a winning strategy in this game.
Accordingly, each instance of $(m,n)_q$-Tic-Tac-Toe is either winning for the
first player or drawing.

A first structural result of the paper is a monotonicity property in the ambient
dimension: if $(m,n)_q$ is winning for the first player, then so is
$(m+1,n)_q$.
This leads naturally to the definition of a threshold function $T(n,q)$,
whenever it exists: namely, the smallest integer $m$ such that $(m,n)_q$
is winning for the first player.
Equivalently, for $m<T(n,q)$ the game is drawing, while for $m\ge T(n,q)$
the first player wins.

The existence of such a threshold is far from obvious a priori,
but follows from Ramsey-theoretic considerations.
The relevant Ramsey input is the affine/vector-space Ramsey theorem of
Graham, Leeb and Rothschild \cite{GrahamLeebRothschild1972}.
In the form needed here, it implies that, for every fixed $n$ and $q$, every
$2$-coloring of the points of $\F_q^m$ contains a monochromatic affine
subspace of dimension $n$, provided $m$ is sufficiently large.
As a consequence, draws are impossible in sufficiently large dimension,
and hence $T(n,q)$ exists for every $n$ and every prime power $q$.

Beyond existence, we are interested in quantitative bounds on $T(n,q)$.
On the upper-bound side, affine Ramsey theory gives general bounds whose
strength depends on the available estimates for finite vector-space Ramsey
numbers and affine extremal numbers. Recent work of Frederickson and
Yepremyan \cite{FredericksonYepremyan2025} surveys the known quantitative
bounds for vector-space Ramsey numbers and proves new estimates in several
cases. These results show that, in the affine and vector-space setting
relevant here, one can use substantially sharper bounds than those obtained
by passing through the general parameter-set theorem.

On the lower-bound side, drawing strategies for the second player can be
interpreted in terms of affine blocking sets, and the Erd\H{o}s-Selfridge
criterion for Maker-Breaker games yields explicit sufficient conditions
for the game to be drawing \cite{Beck2008}.
This produces general lower bounds on $T(n,q)$ in terms of the number of
affine $n$-subspaces of $\F_q^m$.

Our threshold problem is also related in spirit to online Ramsey theory.
In the classical Builder-Painter formulation, Builder reveals edges one at a
time and Painter immediately colors them, while Builder tries to force a
monochromatic copy of a prescribed graph; see
\cite{Conlon2009,ConlonFoxGrinshpunHe2019}.
The games studied here are not Builder-Painter games, but strong positional
games on the hypergraph whose edges are the affine $n$-subspaces of $\F_q^m$.
Nevertheless, both settings ask whether a Ramsey-type conclusion can be forced
more efficiently than by exposing the entire ambient structure.
This suggests an online affine Ramsey variant as a natural related problem.

We also develop a more geometric approach to lower bounds.
Using explicit pairing strategies together with the geometry of affine
hyperplanes, we prove that
\[
T(n,q)\ge n+2
\]
for every $n\ge 2$ and every prime power $q$.
For $q=2$ and $n\in\{2,3\}$, this geometric bound is sharper than the one
obtained from the Erd\H{o}s-Selfridge criterion.

The case $q=2$ also admits a direct argument, independent of the general
Ramsey machinery.
Here we combine Fourier analysis on $\F_2^m$ with an inductive lifting argument
to control the maximum size of subsets of $\F_2^m$ that do not contain an
affine $n$-subspace.
This gives explicit computable upper bounds for the threshold and, in
particular, yields the exponential estimate
\[
T(n,2)\le 2^{n+1}.
\]
We also determine several small values exactly, including
\[
T(1,2)=2,\qquad T(2,2)=4,
\]
and we obtain the range
\[
5\le T(3,2)\le 7.
\]

Taken together, these results show that affine Tic-Tac-Toe provides a natural
and tractable geometric family of positional games.
It retains the combinatorial flavor of classical Tic-Tac-Toe, while replacing
the \emph{ad hoc} geometry of the square grid with the intrinsic structure of
finite affine spaces.
At the same time, it brings into the subject tools from several different areas:
strategy-stealing from combinatorial game theory, blocking sets from finite
geometry, affine Ramsey theory, the Erd\H{o}s-Selfridge potential method,
and Fourier-analytic methods from additive combinatorics.

The paper is organized as follows.
In Section~2 we introduce the game formally, discuss basic notions of strategies
and outcomes, and prove the strategy-stealing lemma together with monotonicity
properties.
In Section~3 we apply the affine/vector-space Ramsey theorem of Graham, Leeb
and Rothschild to obtain existence and general upper bounds for $T(n,q)$.
In Section~4 we use the Erd\H{o}s-Selfridge criterion to derive general lower
bounds.
In Section~5 we develop geometric drawing strategies based on pairing and prove
the lower bound $T(n,q)\ge n+2$ for every $n\ge 2$.
In Section~6 we focus on the binary case $q=2$, where Fourier analysis and
quotient arguments lead to stronger upper bounds and exact results for small
values.
Finally, in Section~7 and Section~8 we discuss several small cases and formulate open problems.

\section{Preliminaries}

Throughout the paper, let $q$ be a prime power and $1 \le n \le m$ be integers. 
We denote by $\F_q^m$ the $m$-dimensional vector space over the finite field $\F_q$.

\subsection{$(m,n)_q$-Tic-Tac-Toe}

We now formally define the generalized Tic-Tac-Toe game under consideration.

\begin{definition}
Let $V = \F_q^m$. The $(m,n)_q$-Tic-Tac-Toe is a two-player positional game defined as follows:

\begin{itemize}
    \item The board consists of the set of points $V$.
    \item Two players, denoted by $P_1$ and $P_2$, alternately claim previously unclaimed points of $V$.
    \item The first player who occupies all the points of an affine subspace of dimension $n$ wins.
    \item If all points of $V$ are claimed and neither player has completed such a configuration, the game ends in a draw.
\end{itemize}
When $q=3$, we sometimes abbreviate this notation to $(m,n)$-Tic-Tac-Toe.
\end{definition}

\begin{remark}\label{rem:game-terminology}
We shall use the following terminology throughout the paper.

\begin{itemize}
    \item The \emph{board} is the finite set $V=\F_q^m$.
    
    \item A \emph{point} is an element of $V$.
    
    \item A \emph{move} is the choice of a previously unclaimed point of $V$.
    A move is called \emph{legal} if the chosen point has not already been claimed.
    
   \item A \emph{position} consists of two disjoint subsets $(A_1,A_2)$ of $V$,
together with the information of whose turn it is to move. Here $A_i$ denotes
the set of points claimed by player $P_i$ up to that moment. For positions
arising from an actual play starting from the empty board, the player to move
is determined by the sizes of $A_1$ and $A_2$.
    
    \item A \emph{play} is a sequence of legal moves, starting from the empty
    position.
    
    \item A \emph{winning set}, or \emph{winning configuration}, is an affine
    subspace of $V$ of dimension $n$.
    
   \item A position is \emph{winning for $P_i$} if, from that position onward,
$P_i$ has a strategy which guarantees a win, no matter how the opponent plays.
    
    \item A \emph{strategy} for a player $P_i$ is a rule that prescribes a legal
    move whenever it is $P_i$'s turn to move.
    
    \item A \emph{winning strategy} for $P_i$ is a strategy which guarantees
    that $P_i$ wins, no matter how the opponent plays.
    
    \item A \emph{drawing strategy} for $P_i$ is a strategy which guarantees
    that $P_i$ does not lose, no matter how the opponent plays.
\end{itemize}
\end{remark}

\begin{definition}
An affine subspace of dimension $n$ in $V$ is a subset of the form
$$
a + U = \{ a + u \mid u \in U \},
$$
where $U \le V$ is a linear subspace of dimension $n$ and $a \in V$.
Such a subspace will also be called an \emph{$n$-plane}.
\end{definition}

The affine-geometric version of classical Tic-Tac-Toe can be interpreted as
the game $(2,1)_3$. The board is the affine plane $\F_3^2$, and a player wins
by occupying all the points of an affine line.

In contrast with the usual formulation of the game, where only eight winning
lines are allowed, the affine-geometric version admits all affine lines in
$\F_3^2$, for a total of twelve winning configurations. Thus the classical
game appears as a restricted version of a richer geometric structure. In Figure \ref{fig:F_3^2} we present some lines, including an example not appearing in the original version of the game.

As recalled in Section~\ref{sec:small-cases}, this version is a first-player
win.

\begin{center}
\begin{tikzpicture}[scale=1.8]

  \draw[step=1, thick] (0,0) grid (3,3);

  \foreach \x in {0,1,2}{
    \foreach \y in {0,1,2}{
      \pgfmathparse{mod(\x+\y,2)==0 ? 1 : 0}
      \ifnum\pgfmathresult=1
        \fill[gray!10] (\x,\y) rectangle ++(1,1);
      \fi
    }
  }

  \draw[step=1, thick] (0,0) grid (3,3);

  \draw[line width=1.3pt, blue] (0.5,0.5) -- (2.5,0.5);

  \draw[line width=1.3pt, red] (2.5,0.5) -- (2.5,2.5);

  \draw[line width=1.3pt, green!60!black] (0.5,2.5) -- (2.5,0.5);

  \draw[line width=1.3pt, orange]
      (0.5,1.5) -- (1.5,2.5) -- (2.5,0.5);

  
\end{tikzpicture}
\captionof{figure}{Example of winning lines in $(2,1)_3$ passing through a given square}
\label{fig:F_3^2}
\end{center}

While $(2,1)_3$ recovers and enriches the familiar childhood game, higher-dimensional instances quickly become more intriguing.

Consider for instance the game $(4,2)_3$. 
The board consists of $3^4=81$ points, which can be visualized as a $3\times3$ array of $3\times3$ boards, corresponding to the decomposition
$$
\F_3^4 \cong \F_3^2 \times \F_3^2.
$$
Therefore, coordinates $(x_1,x_2,y_1,y_2)$ refer to the $(y_1,y_2)$-point of the board identified by $(x_1,x_2)$.
A winning configuration is an affine subspace of dimension $2$, hence a set of $3^2=9$ points.

Unlike in the classical case, these winning planes need not be aligned with
the visible $3\times 3$ blocks, as illustrated in Figure \ref{fig:F_3^4}.

Among all small instances, the $(4,2)_3$-game seems particularly well suited
for actual play\footnote{The original idea for this article arose from a game
in this configuration played by the third author and his friend Lukas Mülli at the University of Zürich.}.

\begin{center}
\begin{tikzpicture}[scale=0.9, every node/.style={font=\small}]
  \def\step{5}

  \foreach \xone in {0,1,2}{
    \foreach \xtwo in {0,1,2}{

      \begin{scope}[shift={({\xone*\step},{\xtwo*\step})}]

        \foreach \yone in {0,1,2}{
          \foreach \ytwo in {0,1,2}{
            \pgfmathtruncatemacro{\parity}{mod(\yone+\ytwo,2)}
            \ifnum\parity=0\relax
              \fill[gray!10] (\yone,\ytwo) rectangle ++(1,1);
            \fi
          }
        }

        \draw[step=1, thick] (0,0) grid (3,3);
        \draw[line width=1.1pt] (0,0) rectangle (3,3);

        \node[font=\footnotesize] at (1.5,3.35) {$ (x_1,x_2)=(\xone,\xtwo)$};

        \node[font=\scriptsize, gray!70] at (-0.55,2.5) {$y_2=2$};
        \node[font=\scriptsize, gray!70] at (-0.55,1.5) {$y_2=1$};
        \node[font=\scriptsize, gray!70] at (-0.55,0.5) {$y_2=0$};

        \node[font=\scriptsize, gray!70] at (0.5,-0.28) {$y_1=0$};
        \node[font=\scriptsize, gray!70] at (1.5,-0.28) {$y_1=1$};
        \node[font=\scriptsize, gray!70] at (2.5,-0.28) {$y_1=2$};

      \end{scope}
    }
  }

  \foreach \xone in {0,1,2}{
    \foreach \xtwo in {0,1,2}{
      \foreach \yone in {0,1,2}{
        \foreach \ytwo in {0,1,2}{

          \pgfmathtruncatemacro{\eqA}{mod(\xone+\xtwo+2*\yone+\ytwo-1,3)}
          \pgfmathtruncatemacro{\eqB}{mod(\xone+2*\xtwo+\ytwo,3)}

          \ifnum\eqA=0\relax
            \ifnum\eqB=0\relax
              \draw[line width=1.5pt, blue]
                ({\xone*\step+\yone+0.2},{\xtwo*\step+\ytwo+0.2})
                --
                ({\xone*\step+\yone+0.8},{\xtwo*\step+\ytwo+0.8});
              \draw[line width=1.5pt, blue]
                ({\xone*\step+\yone+0.8},{\xtwo*\step+\ytwo+0.2})
                --
                ({\xone*\step+\yone+0.2},{\xtwo*\step+\ytwo+0.8});
            \fi
          \fi

        }
      }
    }
  }

  \foreach \xone/\xtwo/\yone/\ytwo in {
    0/0/1/2,
    0/2/0/1,
    1/0/2/1,
    1/1/0/0,
    1/2/2/1,
    2/0/1/0,
    2/1/2/2,
    2/2/0/2
  }{
    \draw[line width=1.6pt, red]
      ({\xone*\step+\yone+0.5},{\xtwo*\step+\ytwo+0.5})
      circle (0.35);
  }


\node[align=center, font=\small] at (6.0, 14.1)
  {Blue X's satisfy $x_1+x_2+2y_1+y_2-1=0$ and $x_1+2x_2+y_2=0$ in $\F_3$.};

\end{tikzpicture}
\captionof{figure}{A winning game position for player Blue in $(4,2)_3$ visualized as $\F_3^2\times \F_3^2$.}
\label{fig:F_3^4}
\end{center}

\subsection{Strategies and outcomes}

\begin{definition}
A finite two-player game of perfect information is a game in which:
\begin{enumerate}[label=(\roman*)]
    \item two players $P_1$ and $P_2$ move alternately;
    \item there is no chance element;
    \item the set of game positions is finite;
    \item at each move both players have full knowledge of the current position
    and of the history of play.
\end{enumerate}
\end{definition}

The notions of strategy, winning strategy and drawing strategy are used in the
sense of Remark~\ref{rem:game-terminology}.

\begin{proposition}[Proposition 99.2 in \cite{OsborneRubinstein1994}]\label{prop:determinacy}
For any finite two-player game of perfect information, exactly one of the following holds:
\begin{itemize}
    \item $P_1$ has a winning strategy;
    \item $P_2$ has a winning strategy;
    \item both players have drawing strategies.
\end{itemize}
\end{proposition}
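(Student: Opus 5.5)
The plan is to prove this determinacy statement (Zermelo's theorem) by backward induction on the finite game tree. First I will make the game tree explicit. Since the game is finite, no play can continue forever. I will encode a position together with the full history, so that at each node the set of legal moves is known to the mover. This is allowed by condition (iv), and it keeps the tree finite by condition (iii). Every maximal play then ends at a terminal node, labelled win for $P_1$, win for $P_2$, or draw. I will prove, by induction on the height $h$ of the subtree rooted at a node $v$, that each node $v$ receives a value $\mathrm{val}(v)\in\{+1,0,-1\}$, meaning respectively win for $P_1$, draw, win for $P_2$, such that:
\begin{itemize}
\item if $\mathrm{val}(v)=+1$, then $P_1$ has a strategy from $v$ that wins against every reply;
\item if $\mathrm{val}(v)=-1$, then $P_2$ has such a strategy;
\item if $\mathrm{val}(v)=0$, then each player has a strategy from $v$ guaranteeing at least a draw.
\end{itemize}

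\textbf{Base case.} If $h=0$, the node is terminal, and the value is its label. The empty strategy does the job.

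\textbf{Inductive step.} Suppose, say, $P_1$ is to move at $v$, and every child has height less than $h$. By induction each child $w$ has a value $\mathrm{val}(w)$ with the associated strategies. Set $\mathrm{val}(v)=\max_w \mathrm{val}(w)$.
\begin{itemize}
\item If the maximum is $+1$, $P_1$ moves to a child of value $+1$ and then follows that child's winning strategy.
\item If the maximum is $-1$, every child has value $-1$. Then $P_2$, whatever $P_1$ plays, follows the winning strategy at the child that is reached.
\item If the maximum is $0$, $P_1$ moves to a child of value $0$ and plays its drawing strategy. $P_2$ responds to any move of $P_1$ with the drawing strategy of the reached child, which exists because every child has value at most $0$, and both winning and drawing strategies guarantee that $P_2$ does not lose.
\end{itemize}
The case where $P_2$ moves is symmetric, with $\min$ in place of $\max$. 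Gluing the child strategies into one strategy from $v$ is legitimate because strategies are rules defined on nodes, and the subtrees of distinct children are disjoint.

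Applying this at the root gives at least one of the three alternatives. It remains to show exclusivity:
\begin{itemize}
\item If both players had winning strategies, playing them against each other would produce a single play that both players win, which is impossible.
\item If $P_1$ has a winning strategy and $P_2$ a drawing strategy, their joint play would be both a win for $P_1$ and a non-loss for $P_2$, again a contradiction. The same holds with the roles exchanged.
\end{itemize}
Hence exactly one of the three alternatives holds.

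The main obstacle is bookkeeping rather than mathematics. A ``position'' in Remark~\ref{rem:game-terminology} does not record the history, so I will work on the tree of histories to make strategies well defined. I also need the terminal condition to make sense: a play stops as soon as someone completes a winning set, or when the board is full. Since this is a standard result, the alternative is simply to cite Osborne--Rubinstein, but the induction above is the proof I would give.
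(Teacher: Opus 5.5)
Your backward induction on the tree of histories is correct and is essentially the standard argument. The paper gives no proof of its own and simply cites Osborne--Rubinstein, whose cited proposition is established by the same induction on the length of subgames (phrased there as existence of a subgame perfect equilibrium), with your direct $\max/\min$ value bookkeeping replacing the equilibrium language. The one point to state explicitly is that condition (iii) of the paper's definition does not by itself bound the length of plays, so finiteness of your history tree rests on the finite-horizon reading of ``finite game''; this is automatic for $(m,n)_q$-Tic-Tac-Toe, since every move claims a new point of $\F_q^m$.
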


\begin{remark}
The $(m,n)_q$-Tic-Tac-Toe is a finite two-player game of perfect information. In particular, the proposition above applies.
\end{remark}

\begin{proposition}\label{prop:P2-cannot-win}
In $(m,n)_q$-Tic-Tac-Toe the second player cannot have a winning strategy.
\end{proposition}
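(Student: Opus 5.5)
I would prove this by the classical strategy-stealing argument, arguing by contradiction. Suppose $P_2$ has a winning strategy $\sigma$ in $(m,n)_q$-Tic-Tac-Toe. I will build a strategy $\sigma'$ for $P_1$ that also wins. Since both players cannot win the same play, and Proposition~\ref{prop:determinacy} says exactly one of the three alternatives holds, this gives the contradiction.

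The strategy $\sigma'$ goes as follows. On the first move, $P_1$ claims an arbitrary point $x_0$ and then privately ignores it. After that, $P_1$ pretends to be the second player in an imagined game in which $x_0$ is unclaimed and $P_2$'s real moves are the first player's moves. In this imagined game $P_1$ answers according to $\sigma$.

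If $\sigma$ prescribes a point that $P_1$ already owns (necessarily the extra point $x_0$), then $P_1$ instead claims some other unclaimed point $x_1$. From then on $x_1$ plays the role of the extra point. A counting check shows that an unclaimed point is always available when $P_1$ must move: whenever it is $P_1$'s turn in the real game, $P_1$ owns as many points as $P_2$, so the total number of claimed points is even. If the board were full, the game would already have ended.

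I would then verify two invariants at every stage.
\begin{enumerate}
\item The set of points $P_1$ really owns is the set of its imagined points plus one extra point.
\item $P_2$'s real points coincide with the imagined first player's points.
\end{enumerate}
With these in hand, the key monotonicity fact is that the winning condition only requires \emph{containing} an affine $n$-subspace. So extra points never hurt $P_1$: whenever the imagined second player completes an $n$-plane, $P_1$ has completed it in the real game.

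The main obstacle is timing, since this is a strong game in which the first completion wins. The imagined play may end with a win by the imagined second player, but $P_2$ must not complete an $n$-plane first in the real game. $P_2$'s real points are exactly the imagined first player's points, and $\sigma$ guarantees the imagined first player never completes a plane before the imagined second player does. Since $P_1$'s real set contains its imagined set, $P_1$ completes its plane no later than in the imagined game, and hence strictly before $P_2$ does.

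One further point needs care. The imagined game could run out before the real one because of the extra point, and the extra point itself could be what completes a real plane for $P_1$ early. Both cases only help $P_1$. Finally, since $\sigma$ wins against every opponent and $P_2$'s real play is an arbitrary legal imagined play, $\sigma'$ wins for $P_1$ against every strategy of $P_2$. This contradicts the assumption that $\sigma$ is winning for $P_2$, which proves Proposition~\ref{prop:P2-cannot-win}.
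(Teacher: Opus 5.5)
Your proposal is correct and uses the same strategy-stealing argument as the paper, including substituting a fresh point when the stolen strategy asks for the point $P_1$ already holds. Your two invariants and your explicit check that $P_2$ cannot complete an $n$-plane first in the real game make precise a timing issue for strong games that the paper's proof passes over. One small slip: the real board always has one more claimed point than the imagined one, so it is the real game, not the imagined one, that can run out first. This happens only when $\sigma$'s winning move is the extra point itself, which $P_1$ already owns, so $P_1$ has already won and your conclusion stands.
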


\begin{proof}
    Assume by contradiction that \(P_2\) has a winning strategy \(\sigma\). Let \(P_1\) start the game with an arbitrary move \(x \in \mathbb{F}_q^m\). Now, since it's \(P_2\) turn to play, \(\sigma\) prescribes a move \(y \in \mathbb{F}_q^m\) for \(P_2\). Then, suppose that \(P_1\) starts an imaginary game where \(y\) is the first and unique move. From now on, \(P_1\) becomes the second player in the imaginary game, so let him play according to \(\sigma\) in such game, transferring every move from the imaginary game to the original one. Also, the moves played by \(P_2\) in the original game are then transferred to the imaginary one. 

    Now we have two cases:
    \begin{itemize}
        \item The move \(x\) never appears in the imaginary game between \(P_1\) and \(P_2\), so \(\sigma\) guarantees a win for \(P_1\) in said game, which clearly translates to a win for \(P_1\) in the original game, a contradiction.
        \item Since \(x\) is played by \(P_1\) in the original game, it can never be played by \(P_2\). Thus, the only option left is that it is to be claimed by \(P_1\) in the imaginary game. If this is the case, since it is already claimed, this can only help \(P_1\), who will just play a different move and continue the game analogously. Therefore, \(P_1\) will also win in the imaginary game, and so the real game, which is again a contradiction.
    \end{itemize}
\end{proof}

\begin{remark}\label{rem:outcome-dichotomy}
From Proposition~\ref{prop:determinacy} and
Proposition~\ref{prop:P2-cannot-win}, it follows that, for every admissible
triple $(m,n)_q$, either $P_1$ has a winning strategy or $P_2$ has a drawing
strategy.
\end{remark}
From now on, when referring to the outcome of the game, we will say that
$(m,n)_q$-Tic-Tac-Toe is \emph{winning} if $P_1$ has a winning strategy, and
\emph{drawing} if $P_2$ has a drawing strategy.

\begin{remark}\label{rem:maker-breaker}
The game $(m,n)_q$-Tic-Tac-Toe is a \emph{strong game} in the sense of
combinatorial game theory: both players compete to complete the same type
of winning configuration.
It is natural to compare it with the corresponding \emph{Maker-Breaker game}
on the same board $X=\F_q^m$ and the same family $\mathcal W$ of winning sets
(all affine $n$-subspaces).
In the Maker-Breaker game, Maker tries to fully occupy some $W\in\mathcal W$,
whereas Breaker wins by preventing Maker from ever completing such a set.
Equivalently, Breaker wins if, at the end of the game, no member of
$\mathcal W$ is entirely claimed by Maker.
 
The two games are related as follows.
 
\begin{enumerate}[label=(\alph*)]
\item\label{item:breaker-implies-draw}
\emph{Breaker wins MB $\Longrightarrow$ strong game is drawing.}
If Breaker has a winning strategy in the Maker-Breaker game,
then $P_2$ can use that strategy in the strong game to prevent $P_1$
from completing any $n$-subspace.
Since $P_2$ cannot have a winning strategy
by Proposition~\ref{prop:P2-cannot-win}, the game is drawing.
 
\item\label{item:ramsey-implies-both}
\emph{Draw is combinatorially impossible $\Longrightarrow$ strong game is winning.}
If every $2$-coloring of $\F_q^m$ contains a monochromatic $n$-subspace
(a Ramsey-type condition), then in any complete play one player must
have completed an $n$-subspace.
By strategy stealing, $P_1$ wins.
This is stronger than both Maker winning MB and $P_1$ winning the strong game.
 
\item\label{item:maker-not-implies}
\emph{Maker wins MB \hspace{3pt}  $\not\!\!\!\Longrightarrow$ strong game is winning.}
A winning strategy for Maker does not transfer to the strong game,
because in the strong game $P_1$ must also defend against $P_2$'s threats.
A classical example is ordinary $3\times 3$ Tic-Tac-Toe
(with the standard $8$ winning lines):
Maker wins the Maker-Breaker version, but the strong game is a draw.
\end{enumerate}
 
In this paper we use direction~\ref{item:breaker-implies-draw} when applying
the Erd\H{o}s-Selfridge criterion (Section~4) to obtain lower bounds on $T(n,q)$,
and direction~\ref{item:ramsey-implies-both} when applying Ramsey-type arguments
(Section~3 and Section~6) to obtain upper bounds.
\end{remark}
 
\subsection{Monotonicity in the ambient dimension}

\begin{proposition}\label{prop:monotonicity-m}
If $(m,n)_q$-Tic-Tac-Toe is winning for $P_1$, then $(m+1,n)_q$-Tic-Tac-Toe is also winning for $P_1$.
\end{proposition}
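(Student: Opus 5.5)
Embed $\F_q^m$ into $\F_q^{m+1}$ as the hyperplane $H=\{x_{m+1}=0\}$. The affine $n$-subspaces of $\F_q^m$ are exactly the affine $n$-subspaces of $\F_q^{m+1}$ contained in $H$, so a win for $P_1$ inside $H$ is a win in the larger game. The plan is for $P_1$ to play the winning strategy $\sigma$ of $(m,n)_q$ inside $H$. This is a strategy-stealing/imagination argument in the style of Proposition~\ref{prop:P2-cannot-win}, adapted to moves by $P_2$ outside $H$.

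$P_1$ runs an imaginary $(m,n)_q$ game on $H$, in which $P_1$ follows $\sigma$. The moves are handled as follows.
\begin{itemize}
\item Whenever $P_2$ plays a point of $H$, that move is transferred to the imaginary game, and $P_1$ answers according to $\sigma$.
\item Whenever $P_2$ plays outside $H$, the imaginary game has no move to respond to. $P_1$ then pretends $P_2$ passed, and makes an arbitrary extra move: either the $\sigma$-move in the imaginary game, which is harmless since extra points can only help, or any free point.
\end{itemize}
The cleanest version uses the standard fact about strong games with monotone winning conditions: an extra claimed point never hurts. Concretely, when $P_2$ moves outside $H$, $P_1$ claims an arbitrary free point of $H$ as a ``bonus'' point. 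Later, if $\sigma$ prescribes a bonus point already owned, $P_1$ plays another arbitrary free point of $H$ and records it as a new bonus, exactly as in Proposition~\ref{prop:P2-cannot-win}. Since $|H|=q^m$ is finite, $P_1$ should just claim any free point when $H$ is full.

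The main obstacle is that a winning strategy in the strong game must win \emph{before} the opponent completes a winning set. In the big game $P_2$ might complete an $n$-subspace not contained in $H$, possibly using points of $H$, before $P_1$ finishes inside $H$. In the imaginary game only $P_2$'s points in $H$ are visible, so $\sigma$ offers no protection against subspaces meeting the complement of $H$. Naive restriction therefore fails, and I would fix it by bounding timing. In the imaginary game, $\sigma$ wins after at most $k$ moves of $P_1$. Meanwhile every $P_2$ move outside $H$ hands $P_1$ a bonus move in $H$, so $P_1$ is never behind in $H$.

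I am not sure this suffices. So the first thing I would actually try is a cleaner reduction: show that if $P_2$ has a drawing strategy $\tau$ in $(m+1,n)_q$, then $P_2$ has one in $(m,n)_q$. Here $P_2$ simulates the big game with an imagined extra copy of moves on $\F_q^{m+1}\setminus H$. Since $\F_q^{m+1}$ is a disjoint union of $q$ parallel copies of $H$, the natural pairing is via the projection $\pi:\F_q^{m+1}\to H$. Each real move $x\in H$ by $P_1$ is interpreted as a move in the big game. Whenever $\tau$ answers outside $H$, $P_2$ plays $\pi$ of that answer, or a free point if it is taken.

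The hard step is checking that $P_1$ completing an $n$-flat in $H$ yields a completed flat in the simulated big game. That should hold because the flat lies in $H$ and all its points were really claimed by $P_1$. It is also necessary that $P_2$ never loses through projected points. A projected flat is an affine image of dimension at most $n$, and if it has dimension $n$ it is a flat in $H$. I expect this projection consistency to be the crux.
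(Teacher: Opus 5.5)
\textbf{Status: your proposal has a genuine gap, and the paper's proof contains the same one.}

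Your second route is the paper's route. You take a drawing strategy $\tau$ for $P_2$ on $\F_q^{m+1}$ and copy $P_1$'s real moves into an imaginary game there. $P_2$ plays $\tau$'s answer when it lies in $H$ and is free, and some other free point of $H$ otherwise. The paper uses an arbitrary free point; your projection $\pi$ is harmless but adds nothing. The step you single out as the crux is in fact immediate. $P_1$'s real and imaginary sets coincide, and every answer of $\tau$ inside $H$ ends up owned by the real $P_2$. So the simulation stays legal, and any flat of $H$ filled by $P_1$ is also filled in the imaginary game. Your first route, as you suspected, does not close. Whether the bonus moves are arbitrary or $\sigma$-moves, nothing bounds the length of $\sigma$'s win by $q^n$. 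Meanwhile your scheme never blocks an $n$-flat that $P_2$ assembles off $H$, say in $H+e_{m+1}$.

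The genuine gap is that same obstacle reappearing in dual form, and you do not address it. $\tau$ is only a drawing strategy of the \emph{strong} game, so it only guarantees that $P_1$ does not complete a flat \emph{before} $P_2$ does. The real, and hence the imaginary, $P_1$ is confined to $H$ and can never block flats in the parallel hyperplanes. So the imaginary play may well end with the imaginary $P_2$ completing an $n$-flat $F\not\subseteq H$. (If $F\subseteq H$ there is no problem, since the real $P_2$ then owns $F$.) From that moment $\tau$ prescribes nothing, the real $P_2$ has not won, and nothing stops $P_1$ from later filling an $n$-flat of $H$.

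Your projection does not rescue this. $\pi(F)$ collapses to an $(n-1)$-flat whenever $\ker\pi$ is contained in the direction of $F$. Even when $\pi(F)$ is an $n$-flat, some of its points may already belong to $P_1$; those are exactly the points where your fallback ``free point if taken'' was used instead.

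What the reduction really needs is a Breaker strategy on $\F_q^{m+1}$, one that prevents $P_1$ from ever completing a flat. A strong-game draw does not supply one; compare ordinary $3\times 3$ Tic-Tac-Toe in Remark~\ref{rem:maker-breaker}. For general strong games this kind of simulation cannot work, by Beck's extra set paradox: adding winning sets can turn a first-player win into a draw.

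Be aware that the paper's written proof has the same hole. Its assertion that ``at no point during the imaginary play'' does $P_1$ fill a flat is valid only while the imaginary play lasts. The case of an imaginary win of $P_2$ off $H$ is never treated. So neither your proposal nor the paper's argument, as written, establishes the proposition. An additional idea specific to affine flats would be needed.
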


\begin{proof}
We prove the contrapositive. Assume that $(m+1,n)_q$-Tic-Tac-Toe is drawing, and let $\sigma$ be a drawing strategy for $P_2$ on $\F_q^{m+1}$.

Consider the affine hyperplane
$$
H = \F_q^m \times \{0\} \subseteq \F_q^m \times \F_q = \F_q^{m+1},
$$
and identify $H$ with $\F_q^m$. We define a drawing strategy for $P_2$ in $(m,n)_q$-Tic-Tac-Toe played on $H$ by simulating a play of $(m+1,n)_q$-Tic-Tac-Toe on $\F_q^{m+1}$.

Player $P_2$ maintains an imaginary game on $\F_q^{m+1}$. Whenever $P_1$ claims a point $x$ in the real game on $H$, the same point $x$ is recorded as $P_1$'s move in the imaginary game (via the natural inclusion $H \hookrightarrow \F_q^{m+1}$). Player $P_2$ then consults $\sigma$, which prescribes a move $y \in \F_q^{m+1}$. If $y \in H$ and $y$ is unclaimed in the real game, then $P_2$ plays $y$ in the real game. Otherwise, $P_2$ plays any unclaimed point of $H$ in the real game. In both cases, the move $y$ is recorded as $P_2$'s move in the imaginary game.

By construction, the imaginary play is consistent with $\sigma$: at each turn, $P_2$'s move in the imaginary game is the one prescribed by $\sigma$. Moreover, the set of points claimed by $P_1$ in the imaginary game coincides exactly with the set claimed by $P_1$ in the real game (all lying in $H$).

Since $\sigma$ is a drawing strategy on $\F_q^{m+1}$, at no point during the imaginary play does $P_1$ occupy all points of an affine subspace of dimension $n$ in $\F_q^{m+1}$. Since every affine subspace of dimension $n$ contained in $H$ is also an affine subspace of dimension $n$ in $\F_q^{m+1}$, it follows that $P_1$ never occupies all points of an affine subspace of dimension $n$ in $H$.

Therefore $P_2$ does not lose the real game on $H$, and $(m,n)_q$-Tic-Tac-Toe is drawing.
\end{proof}

\begin{proposition}
If $(m,n)_q$-Tic-Tac-Toe is drawing, then $(m,n+1)_q$-Tic-Tac-Toe is also drawing.
\end{proposition}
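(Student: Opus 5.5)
The plan is to transfer the drawing strategy for $P_2$ in the $(m,n)_q$-game to the $(m,n+1)_q$-game on the same board $V=\F_q^m$ with the strategy left unchanged. The observation that makes this work is that every affine subspace of dimension $n+1$ contains an affine subspace of dimension $n$. So any player who completes an $(n+1)$-plane has, at that moment or earlier, already completed an $n$-plane. (We need $n+1\le m$ for the statement to make sense; if $n+1>m$ there are no winning sets and the game is trivially drawing.)

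Concretely, let $\sigma$ be a drawing strategy for $P_2$ in the $(m,n)_q$-game, and let $P_2$ follow $\sigma$ in the $(m,n+1)_q$-game. One subtlety is that the $(m,n)_q$-game may stop earlier than the $(m,n+1)_q$-game, because $P_2$ might complete an $n$-plane, and then $\sigma$ prescribes nothing afterwards. The cleanest fix is to pass through the Maker-Breaker viewpoint. A drawing strategy guarantees only that $P_1$ never completes a winning set while the game is running. So extend $\sigma$ to a strategy $\sigma'$ defined on all positions, as follows.

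\begin{itemize}
\item Before $P_2$ has completed an $n$-plane, $\sigma'$ plays exactly as $\sigma$.
\item Once $P_2$ has completed an $n$-plane, the $(m,n)_q$-game would already be over with $P_1$ not having won.
\end{itemize}

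In the second case we need some other argument for the remaining moves. The simplest version avoids this issue altogether. Play the $(m,n+1)_q$-game with $P_2$ following $\sigma$ for as long as no $n$-plane has been completed by either player. Since $\sigma$ is drawing, $P_1$ is never the first to complete an $n$-plane. Hence either the board fills up with no $n$-plane completed by anyone, or $P_2$ completes an $n$-plane first.

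In the first case nobody holds an $(n+1)$-plane either, because each would contain an $n$-plane; so $P_1$ has not won. The second case is the main obstacle: after that moment $P_1$ might still go on to complete an $(n+1)$-plane before $P_2$ does. I would handle it by noting that $P_2$ is free to continue in any way, so we would like $P_2$ to keep blocking $P_1$'s $n$-planes. That requires $\sigma$ to be a Breaker-type strategy, i.e.\ one that keeps $P_1$ from completing an $n$-plane even when play continues to the end.

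I would argue that such a strategy exists whenever the strong game is drawing, by a strategy-stealing/determinacy argument. The modified game in which $P_2$'s completions are ignored is a Maker-Breaker game. If Maker won it, one would want to conclude that $P_1$ wins the strong game, but that is exactly direction~\ref{item:maker-not-implies} of Remark~\ref{rem:maker-breaker}, which fails in general. So this route is doubtful.

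The alternative I would actually try first is to stay entirely within strong games. Define $P_2$'s strategy in the $(m,n+1)_q$-game to be $\sigma$ at every position where no $n$-plane is fully owned by either player. At the first moment an $n$-plane becomes monochromatic, it belongs to $P_2$ (by the drawing property of $\sigma$). From that moment $P_2$ switches to a strategy that extends this $n$-plane to an $(n+1)$-plane, blocking $P_1$'s immediate threats as needed, and otherwise plays arbitrarily. The crux is showing that $P_1$ cannot complete an $(n+1)$-plane first after this moment. If this cannot be secured, I would fall back to proving the Breaker-type property of $\sigma$ directly for these affine hypergraphs, using their symmetry.
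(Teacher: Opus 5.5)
Your diagnosis is correct, but the proposal stops exactly where an argument is needed, so as it stands it is not a proof.

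The only nontrivial case is the one you isolate: while following $\sigma$, $P_2$ completes an $n$-plane before the board is full. From that moment $\sigma$ gives no guarantee, and nothing in your proposal prevents $P_1$ from later completing an $(n+1)$-plane before $P_2$ does. Your ``switch'' strategy is never analysed. $P_2$ still needs $q^{n+1}-q^n$ further points to extend its $n$-plane, while $P_1$ may already hold, or be able to create, two unblocked $1$-close $(n+1)$-planes with distinct missing points. The fallback of showing that $\sigma$ can be chosen Breaker-type ``by symmetry'' is a hope rather than a step. It cannot come from general principles, since a strong draw does not give a Breaker win (Remark~\ref{rem:maker-breaker}\ref{item:maker-not-implies}).

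In fact the missing step cannot be supplied by the containment ``every $(n+1)$-plane contains an $n$-plane'' alone. On the $3\times 3$ board, let $\mathcal H$ be the $8$ lines and $\mathcal H'$ the family of $4$-sets containing a line. Every member of $\mathcal H'$ contains a member of $\mathcal H$, and the strong game on $\mathcal H$ is a draw. Yet the strong game on $\mathcal H'$ is a first-player win. $P_2$ cannot own four points before move $8$. Meanwhile the standard Maker strategy (centre, a forcing move, then a fork) guarantees that $P_1$'s first four points contain a line, so $P_1$ wins at move $7$. The drawing strategy for $\mathcal H$ relies on counter-threats, which stop being threats in the new game.

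So a proof must either exploit the affine geometry or produce a genuine Breaker strategy for $n$-planes. For a Breaker strategy (for instance one given by the Erd\H{o}s--Selfridge criterion), your first paragraph is already a complete argument.

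The paper's proof is precisely that one-line transfer. It describes the drawing strategy as one ``that prevents $P_1$ from occupying all points of any affine subspace of dimension $n$'', i.e.\ it tacitly assumes the Breaker-type property. It does not treat the case in which $P_2$ completes an $n$-plane first. So the gap you found is present in the paper's argument too, and neither your proposal nor that argument closes it.
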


\begin{proof}
Assume that $(m,n)_q$-Tic-Tac-Toe is drawing, so $P_2$ has a drawing strategy, i.e., a strategy that prevents $P_1$ from occupying all points of any affine subspace of dimension $n$.

In $(m,n+1)_q$-Tic-Tac-Toe, player $P_2$ plays according to the same strategy. If $P_1$ were to win in this game, then $P_1$ would occupy all points of some affine subspace of dimension $n+1$. Any affine subspace of dimension $n+1$ contains an affine subspace of dimension $n$, hence $P_1$ would also occupy all points of an affine subspace of dimension $n$, contradicting the definition of the strategy. Therefore $P_2$ can avoid losing also in $(m,n+1)_q$-Tic-Tac-Toe, and the game is drawing.
\end{proof}

By Proposition \ref{prop:monotonicity-m} the property of being winning is monotone in the ambient dimension $m$. This motivates the following definition.

\begin{definition}\label{def:threshold}
Fix $n\ge 1$ and a prime power $q$. If there exists an integer $m$ such that
$(m,n)_q$-Tic-Tac-Toe is winning, we define $T(n,q)$ to be the smallest such
integer.
\end{definition}

When $q=3$, we sometimes write $T(n)$ for $T(n,3)$.
\begin{remark}
If $T(n,q)$ exists, then by Proposition~\ref{prop:monotonicity-m}, for every
$m\ge T(n,q)$ the game $(m,n)_q$-Tic-Tac-Toe is winning, and for every
$m<T(n,q)$ it is drawing. In Section~\ref{sec:ramsey-upper-bound} we prove
that $T(n,q)$ exists for every $n$ and every prime power $q$.
\end{remark}

\section{An affine Ramsey theory upper bound}\label{sec:ramsey-upper-bound}

We now explain the Ramsey-theoretic input underlying the existence of
$T(n,q)$ and giving a general upper bound.

\begin{definition}
For integers $n\ge 1$ and $r\ge 1$, let $\AR_q(n;r)$ be the smallest integer
$M$, if it exists, with the following property: every $r$-coloring of the
points of $\F_q^M$ contains a monochromatic affine subspace of dimension $n$.
\end{definition}

The existence of $\AR_q(n;r)$ is a special case of the affine/vector-space
Ramsey theorem of Graham, Leeb and Rothschild.

\begin{theorem}[Graham-Leeb-Rothschild, affine form
{\cite{GrahamLeebRothschild1972}}]\label{thm:affine-GLR}
For every prime power $q$ and all integers $n\ge 1$ and $r\ge 1$, the number
$\AR_q(n;r)$ exists.
Equivalently, for every fixed $n,q,r$, every $r$-coloring of the points of
$\F_q^m$ contains a monochromatic affine $n$-subspace, provided $m$ is
sufficiently large.
\end{theorem}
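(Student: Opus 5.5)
The plan is to derive Theorem~\ref{thm:affine-GLR} from a Hales--Jewett-type statement. The natural candidate is the Graham--Rothschild parameter-set theorem, or more elementarily the Hales--Jewett theorem applied iteratively, via the standard dictionary between combinatorial subspaces and affine subspaces. I do not intend to prove the Graham--Leeb--Rothschild theorem in its full vector-space (linear subspace) form. Only the affine point-colouring statement is needed, and this is considerably easier.

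The first step is the dictionary. Identify the alphabet $[q]$ with $\F_q$, so that $[q]^N=\F_q^N$. A combinatorial $n$-space in $[q]^N$ is specified by disjoint nonempty sets of active coordinates $S_1,\dots,S_n$ together with fixed values on the remaining coordinates. It is the image of the map
\[
(t_1,\dots,t_n)\mapsto c+\sum_{i=1}^n t_i\,\mathbf 1_{S_i},\qquad t_i\in\F_q ,
\]
where $c$ is the vector of fixed values, padded with zeros on the active coordinates. The vectors $\mathbf 1_{S_1},\dots,\mathbf 1_{S_n}$ have disjoint nonempty supports, so they are linearly independent over $\F_q$. Hence every combinatorial $n$-space is an affine $n$-subspace of $\F_q^N$. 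This reduces the theorem to the multidimensional Hales--Jewett theorem: for all $q,n,r$ there is $N$ such that every $r$-colouring of $[q]^N$ contains a monochromatic combinatorial $n$-space.

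The second step is to derive the multidimensional version from the ordinary Hales--Jewett theorem \cite{HalesJewett}, which the introduction already cites. Set $N=n\cdot HJ(q^n,r)$ and view $[q]^N$ as $([q]^n)^{HJ(q^n,r)}$, that is, as words over the alphabet $A=[q]^n$ of size $q^n$. An $r$-colouring of $[q]^N$ is then a colouring of $A^{HJ}$. By Hales--Jewett this colouring has a monochromatic combinatorial line $\{w(a):a\in A\}$.

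Write $a=(a_1,\dots,a_n)$. Inside each block, the wildcard positions of the line receive the full tuple $a$. Split the wildcard set accordingly: $S_i$ consists of the $i$-th coordinate of each wildcard block. These sets are disjoint and nonempty, and $w(a)=c+\sum_i a_i\mathbf 1_{S_i}$. So the line is a combinatorial $n$-space in $[q]^N$, and by the first step it is an affine $n$-subspace. Hence $\AR_q(n;r)\le n\cdot HJ(q^n,r)$.

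Finally, the equivalence of the two formulations in the theorem needs monotonicity in $m$. Restricting an $r$-colouring of $\F_q^{m}$ to a copy of $\F_q^{M}\times\{0\}$ shows that if $\AR_q(n;r)=M$ exists, the property holds for every $m\ge M$. The only delicate point is the second step, namely checking that the block decomposition really yields disjoint nonempty supports and hence linear independence. This is immediate once written out. The heavy lifting is outsourced to Hales--Jewett, which I would cite rather than reprove.
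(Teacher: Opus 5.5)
Your argument is correct. It also does more than the paper, which gives no proof of this statement and simply invokes the Graham--Leeb--Rothschild theorem. That theorem in its general form concerns colourings of $k$-dimensional subspaces, and the point-colouring statement here is its simplest case.

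You instead show that this case needs only the classical Hales--Jewett theorem. Grouping $[q]^{nH}$ as $([q]^n)^H$ with $H=\mathrm{HJ}(q^n,r)$ turns a monochromatic combinatorial line over the alphabet $[q]^n$, with wildcard block set $W\neq\emptyset$, into the set $c+\operatorname{span}(\mathbf 1_{S_1},\dots,\mathbf 1_{S_n})$, where $S_i=\{(b,i):b\in W\}$. The $S_i$ are disjoint and nonempty, so these vectors are linearly independent over $\F_q$ for every prime power $q$. Hence this set is a genuine affine $n$-subspace.

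Your route buys self-containedness and an explicit bound $\AR_q(n;r)\le n\cdot\mathrm{HJ}(q^n,r)$. That is all that is needed for the existence of $T(n,q)$, which is the only use the paper makes of the theorem.

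What it costs is quantitative strength. Even with Shelah's primitive recursive bounds, $\mathrm{HJ}(q^n,r)$ is enormous. This is exactly the ``pass through Hales--Jewett/parameter sets'' route that the paper contrasts with the much sharper vector-space Ramsey estimates of Frederickson--Yepremyan. It also contrasts with the paper's own $\AR_2(n;2)\le 7\cdot 2^{n-3}+3$ for $n\ge 5$.

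Your treatment of the equivalence between the two formulations is fine. The direction you leave implicit is trivial: if the property holds for all large $m$, then a least such $M$ exists.
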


We shall use this theorem only for $r=2$. Set
\[
M(n,q):=\AR_q(n;2).
\]
Then, by Theorem~\ref{thm:affine-GLR}, $M(n,q)$ is finite for every \(n \ge 1\) and \(q\) prime power, and has the following property.

\begin{proposition}\label{prop:GR-upper}
For every $n\ge 1$ and every $m\ge M(n,q)$, every $2$-coloring of $\F_q^m$
contains a monochromatic affine subspace of dimension $n$.
\end{proposition}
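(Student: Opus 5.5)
The statement concerns every $m\ge M(n,q)$, while the definition of $M(n,q)=\AR_q(n;2)$ only gives the property at $m=M(n,q)$ itself. So the plan is to reduce to the case $m=M:=M(n,q)$ by restricting a coloring of $\F_q^m$ to a copy of $\F_q^M$ sitting inside it. This is the same embedding idea used in the proof of Proposition~\ref{prop:monotonicity-m}.

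First, I will fix $m\ge M$ and an arbitrary $2$-coloring $\chi\colon \F_q^m\to\{1,2\}$. Next, I will use the natural injection $\iota\colon \F_q^M\to\F_q^m$, $x\mapsto (x,0,\dots,0)$. This is an injective linear map, and its image is the coordinate subspace $\F_q^M\times\{0\}^{m-M}$. The pullback $\chi\circ\iota$ is then a $2$-coloring of $\F_q^M$. By the definition of $\AR_q(n;2)$, whose finiteness is guaranteed by Theorem~\ref{thm:affine-GLR}, the coloring $\chi\circ\iota$ has a monochromatic affine subspace $a+U\subseteq\F_q^M$ with $\dim U=n$.

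The last step is to check that $\iota(a+U)=\iota(a)+\iota(U)$ is an affine subspace of $\F_q^m$ of dimension $n$. This holds because $\iota$ is linear and injective, so $\iota(U)$ is a linear subspace and $\dim\iota(U)=n$. The set $\iota(a+U)$ is monochromatic under $\chi$ by construction, which proves the claim.

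None of these steps should be a real obstacle; the argument is essentially bookkeeping. The only point needing care is the one already noted: the defining property of $\AR_q$ is stated only at $M$, so monotonicity in $m$ must be argued through the embedding above rather than assumed.
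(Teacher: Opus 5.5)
Your proof is correct and is essentially the paper's argument. The paper also restricts the coloring to an $M(n,q)$-dimensional affine subspace of $\F_q^m$ and notes that the monochromatic affine $n$-subspace found there is an affine $n$-subspace of $\F_q^m$; you simply make the identification explicit through the coordinate embedding $\iota$, which the paper leaves implicit.
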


\begin{proof}
Let $m\ge M(n,q)$ and consider a $2$-coloring of $\F_q^m$.
Restrict this coloring to any $M(n,q)$-dimensional affine subspace of
$\F_q^m$. By the definition of $M(n,q)=\AR_q(n;2)$, this restriction contains
a monochromatic affine subspace of dimension $n$. This is also an affine
$n$-subspace of $\F_q^m$.
\end{proof}

We record the consequence for the game.

\begin{proposition}\label{prop:ramsey-game-upper}
For every $n\ge 1$ and every prime power $q$, if $m\ge M(n,q)$ then
$(m,n)_q$-Tic-Tac-Toe is winning. In particular,
\[
T(n,q)\le M(n,q)=\AR_q(n;2).
\]
\end{proposition}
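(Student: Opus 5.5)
The plan is to combine Proposition~\ref{prop:GR-upper} with the dichotomy of Remark~\ref{rem:outcome-dichotomy}, following direction~\ref{item:ramsey-implies-both} of Remark~\ref{rem:maker-breaker}. Fix $m\ge M(n,q)$. By Remark~\ref{rem:outcome-dichotomy}, it is enough to rule out a drawing strategy for $P_2$: once that is done, $P_1$ has a winning strategy and $(m,n)_q$-Tic-Tac-Toe is winning.

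So suppose $P_2$ has a drawing strategy $\sigma$, and let $P_1$ play arbitrarily while $P_2$ follows $\sigma$. There are two possibilities.

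\emph{Case 1: the play stops early.} Under the rules, this happens only when some player completes an affine $n$-subspace. Since $\sigma$ is drawing, that player cannot be $P_1$, so $P_2$ wins this play.

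\emph{Case 2: the play runs until every point of $\F_q^m$ is claimed.} Color each point by the player who owns it. This is a $2$-coloring of $\F_q^m$, so Proposition~\ref{prop:GR-upper} gives a monochromatic affine $n$-subspace. The owner of that subspace occupied all of its points at some moment, and the first moment at which any winning set was fully claimed ended the game with a winner. Hence the play cannot end in a draw. Since $\sigma$ forbids $P_1$ from winning, $P_2$ wins.

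In both cases $\sigma$ wins for $P_2$ against every $P_1$ strategy, i.e.\ $\sigma$ is a winning strategy for $P_2$. This contradicts Proposition~\ref{prop:P2-cannot-win}.

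Therefore $P_1$ has a winning strategy for every $m\ge M(n,q)$. In particular $T(n,q)$ exists by Definition~\ref{def:threshold}, and $T(n,q)\le M(n,q)$ because $M(n,q)$ is one admissible value of $m$.

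The only delicate point is Case~2. The coloring is defined only at the end of a completed play, while a win is decided at the moment a winning set is first filled. One has to check that a completed board with a monochromatic $n$-subspace really corresponds to a play with a winner, not a draw. This holds because a player's claimed set only grows during the play, so some first completion moment exists. Everything else is immediate from the cited results.
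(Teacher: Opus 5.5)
Your proof is correct and is essentially the paper's argument. Proposition~\ref{prop:GR-upper} rules out any drawn complete play, and the strategy-stealing dichotomy then forces a first-player win. The only difference is packaging: you show that a drawing strategy for $P_2$ would actually be a winning one, contradicting Proposition~\ref{prop:P2-cannot-win}, whereas the paper extracts a drawn complete play directly from ``the game is drawing'' and leaves implicit the step of playing two drawing strategies against each other that justifies this.
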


\begin{proof}
Let $m\ge M(n,q)$. Suppose, for contradiction, that $(m,n)_q$-Tic-Tac-Toe is
drawing. Then there is a complete play in which no player occupies an affine
$n$-subspace. At the end of this play, the ownership of the points of $\F_q^m$
defines a $2$-coloring of $\F_q^m$, according to whether a point was claimed
by $P_1$ or by $P_2$.

By Proposition~\ref{prop:GR-upper}, this coloring contains a monochromatic
affine subspace of dimension $n$. Hence one of the two players has completed
a winning configuration, contradicting the assumption that the play was a
draw. Thus the game is not drawing.

By Remark~\ref{rem:outcome-dichotomy}, if the game is not drawing, then it is
winning for $P_1$. Therefore $(m,n)_q$-Tic-Tac-Toe is winning.
\end{proof}

\begin{remark}
The bound obtained in Proposition~\ref{prop:ramsey-game-upper} is an offline
Ramsey bound: it uses only the fact that every sufficiently large complete
$2$-coloring of the affine space contains a monochromatic affine $n$-subspace.
Quantitative estimates for such affine and vector-space Ramsey numbers are an
active topic. We refer to Frederickson and Yepremyan
\cite{FredericksonYepremyan2025} for a recent survey of known bounds and for
new estimates in several cases, and to Hunter and Pohoata
\cite{HunterPohoata2025} for a further improvement in a binary off-diagonal
case.
The direct arguments in Section~\ref{sec:fourier} should be viewed as
game-specific and self-contained estimates in the binary case.
\end{remark}

\section{An Erdős-Selfridge lower bound}\label{sec:erdos-selfridge}

We now obtain a lower bound on $T(n,q)$ using the
Erdős-Selfridge criterion for Maker-Breaker games.

\subsection{The Erdős-Selfridge criterion}

The Erdős-Selfridge criterion is a fundamental result in the theory of
Maker-Breaker positional games; we refer to \cite{Beck2008}
for a more comprehensive treatment.

Let $X$ be a finite set, and denote by $2^X$ its power set. Let
$\mathcal W\subseteq 2^X$ be a family of subsets, called the winning sets. In
the unbiased Maker-Breaker game on $(X,\mathcal W)$, the two players alternately
claim previously unclaimed elements of $X$. Maker wins if he completely occupies
some $W\in\mathcal W$; otherwise Breaker wins.

The next result is due to Erdős and Selfridge; see Beck \cite[Theorem 1.4]{Beck2008}.
\begin{theorem}\label{thm:ES}
Let $X$ be a finite set and $\mathcal W \subseteq 2^X$ a family of winning sets.
If
$$
\sum_{W \in \mathcal W} 2^{-|W|} < \frac12,
$$
then Breaker has a winning strategy.
\end{theorem}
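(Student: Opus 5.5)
The plan is the classical potential-function argument. Throughout the game, Breaker maintains a weighted count of the winning sets that are still ``alive'' and chooses moves so that this count never increases. The hypothesis guarantees that the count starts below $\tfrac12$, which will be incompatible with Maker ever completing a set.

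\textbf{Setting up the potential.} Consider a position in which Maker has claimed $M$ and Breaker has claimed $B$. Call $W\in\mathcal W$ \emph{alive} if $W\cap B=\emptyset$. For each such set, let $u(W)=|W\setminus M|$ be the number of its points still unclaimed, and define
\[
\Phi=\sum_{W\ \text{alive}} 2^{-u(W)} .
\]
If Maker ever fully occupies some $W$, that set is alive with $u(W)=0$, so it alone contributes $1$ to $\Phi$ and hence $\Phi\ge 1$. At the start of the game $u(W)=|W|$ for every $W$, so $\Phi<\tfrac12$ by hypothesis. It therefore suffices to show that $\Phi$ stays below $1$ at every moment at which Maker has just moved.

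\textbf{Breaker's strategy.} After each Maker move, Breaker claims an unclaimed point $y$ maximizing
\[
\phi(y)=\sum_{W\ \text{alive},\ y\in W}2^{-u(W)}.
\]
The effect of each move on $\Phi$ is as follows.

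\begin{itemize}
\item When Breaker claims $y$, every alive set containing $y$ dies. So $\Phi$ decreases by exactly $\phi(y)$.
\item When Maker then claims $x$, every alive set containing $x$ has its exponent $u(W)$ decreased by one, so its term doubles. Thus $\Phi$ increases by $\phi'(x)$, the weight of $x$ computed after Breaker's move.
\end{itemize}

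The key comparison is that $\phi'(x)\le\phi(x)\le\phi(y)$. The first inequality holds because Breaker's move only kills sets and does not change the $u$-values of the sets that survive. The second holds by the maximality of $y$, since $x$ was also unclaimed when Breaker chose $y$. Hence each full round, Breaker's move followed by Maker's reply, does not increase $\Phi$. If Breaker's move ends the game or Maker has no reply, there is nothing more to check.

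\textbf{The first move.} Maker's opening move happens before Breaker has any move to compensate for it, and this is the only delicate point. That move can at most double every term, so just after it $\Phi<2\cdot\tfrac12=1$. From then on the rounds analysed above keep $\Phi<1$ after every Maker move. Values of $\Phi$ after Breaker's moves are smaller still, so they need no separate treatment.

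\textbf{Conclusion.} Since $\Phi<1$ throughout, Maker never completes a winning set. The game ends with every point claimed, so Breaker has won.
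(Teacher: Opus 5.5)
Your proof is correct. It is the standard Erdős--Selfridge potential argument: Breaker greedily kills the alive sets of maximum weight, so each Breaker-then-Maker round leaves $\Phi$ non-increasing, and Maker's uncompensated opening move is absorbed by the factor $2$ in the hypothesis $\sum_{W}2^{-|W|}<\tfrac12$. The paper gives no proof of its own; it cites Beck's Theorem~1.4 and remarks only that the proof is a potential argument, which is exactly your route. You take Maker to move first, which is the case the $\tfrac12$ threshold is designed for and the one the paper uses, with $P_1$ as Maker.
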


Although the proof is based on a potential argument, the conclusion is
fully deterministic: it guarantees the existence of a deterministic
strategy for Breaker.

By Remark~\ref{rem:maker-breaker}\ref{item:breaker-implies-draw}, if Breaker wins the Maker-Breaker game then the strong game is drawing in our terminology.

\subsection{Application to $(m,n)_q$-Tic-Tac-Toe}

We associate with $(m,n)_q$-Tic-Tac-Toe the Maker-Breaker game on the board
$X=\F_q^m$, whose winning sets are the affine subspaces of dimension $n$.

Let $\mathcal W$ denote the family of all affine $n$-subspaces of $\F_q^m$.
Each winning set has size $q^n$.

\begin{proposition}\label{prop:number-affine-subspaces}
The number of affine subspaces of dimension $n$ in $\F_q^m$ is
$$
|\mathcal W| = q^{m-n}\binom{m}{n}_q,
$$
where
$$
\binom{m}{n}_q
=
\prod_{i=0}^{n-1}\frac{q^m-q^i}{q^n-q^i}
$$
is the Gaussian binomial coefficient.
\end{proposition}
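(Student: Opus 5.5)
The plan is a double count. First, I will count the linear $n$-dimensional subspaces of $\F_q^m$. Then I will count the translates of each such subspace. Every affine $n$-subspace has the form $a+U$ with $U\le \F_q^m$ linear of dimension $n$. The linear part $U$ is determined by the affine subspace $W=a+U$, since $U=W-W=\{w-w' : w,w'\in W\}$. Hence the affine $n$-subspaces are partitioned according to their direction $U$. For a fixed $U$, two translates $a+U$ and $b+U$ coincide if and only if $a-b\in U$. So the translates of $U$ are exactly the cosets in $\F_q^m/U$, and there are $q^m/q^n=q^{m-n}$ of them. It therefore suffices to show that the number of $n$-dimensional linear subspaces equals $\binom{m}{n}_q$ as defined in the statement.

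For that count I will use ordered bases.

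\begin{enumerate}
\item Count the ordered $n$-tuples $(v_1,\dots,v_n)$ of linearly independent vectors in $\F_q^m$. The vector $v_{i+1}$ must avoid the span of $v_1,\dots,v_i$, which has $q^i$ elements. This gives $\prod_{i=0}^{n-1}(q^m-q^i)$ tuples.
\item Each such tuple spans exactly one $n$-dimensional subspace $U$.
\item Each $U\cong\F_q^n$ has exactly $\prod_{i=0}^{n-1}(q^n-q^i)$ ordered bases, by the same argument as in step~1 applied inside $U$.
\end{enumerate}

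Dividing the count in step~1 by the count in step~3 gives the Gaussian binomial $\prod_{i=0}^{n-1}\frac{q^m-q^i}{q^n-q^i}$. Multiplying by $q^{m-n}$ then yields $|\mathcal W|$.

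The only point needing care is that the map from affine subspaces to pairs (direction, coset) is a bijection, so nothing is double counted. This is exactly the observation that $W-W=U$ recovers the direction. The rest is the routine orbit-counting argument.
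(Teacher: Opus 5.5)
Your proof is correct and follows the paper's argument: you partition the affine $n$-subspaces by their direction $U$ and count the $q^{m-n}$ cosets of each $U$. You also justify two points the paper takes for granted: that the direction is recovered as $W-W=U$, and that the number of $n$-dimensional linear subspaces equals the Gaussian binomial, which you derive by counting ordered bases.
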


\begin{proof}
Every affine subspace of dimension $n$ is of the form $a+U$, where
$U\le \F_q^m$ is a linear subspace of dimension $n$. The number of possible
linear subspaces $U$ is
$$
\binom{m}{n}_q.
$$

For a fixed $n$-dimensional subspace $U$, the affine subspaces with direction
$U$ are precisely the cosets of $U$ in $\F_q^m$. Hence their number is
$$
|\F_q^m/U|=q^{m-n}.
$$
Therefore the total number of affine subspaces of dimension $n$ is
$$
q^{m-n}\binom{m}{n}_q.
$$
\end{proof}

\begin{theorem}\label{thm:drawing_2qn-1}
If
$$
q^{m-n} \binom{m}{n}_q < 2^{q^n-1},
$$
then $(m,n)_q$-Tic-Tac-Toe is drawing.
In particular,
$$
T(n,q) > m.
$$
\end{theorem}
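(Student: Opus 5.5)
The plan is to apply the Erd\H{o}s-Selfridge criterion (Theorem~\ref{thm:ES}) directly to the Maker-Breaker game on $X=\F_q^m$, with $\mathcal W$ the family of all affine $n$-subspaces. We then transfer the conclusion to the strong game via Remark~\ref{rem:maker-breaker}\ref{item:breaker-implies-draw}.

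First, we compute the Erd\H{o}s-Selfridge potential. Every $W\in\mathcal W$ has exactly $|W|=q^n$ points, so every term of the sum is equal. By Proposition~\ref{prop:number-affine-subspaces},
\[
\sum_{W\in\mathcal W}2^{-|W|}=|\mathcal W|\,2^{-q^n}=q^{m-n}\binom{m}{n}_q\,2^{-q^n}.
\]
The hypothesis $q^{m-n}\binom{m}{n}_q<2^{q^n-1}$ is equivalent, after multiplying by $2^{-q^n}$, to this sum being strictly less than $2^{-1}=\tfrac12$. Theorem~\ref{thm:ES} then gives Breaker a winning strategy in the Maker-Breaker game on $(\F_q^m,\mathcal W)$.

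Second, we pass to the strong game. Player $P_2$ adopts Breaker's strategy, treating $P_1$ as Maker. Breaker's strategy only needs to know which points have been claimed, and Maker's moves in the strong game are arbitrary legal moves, so this is well defined. As a result, $P_1$ never fully occupies an affine $n$-subspace, hence $P_1$ does not win. By Proposition~\ref{prop:P2-cannot-win} and Remark~\ref{rem:outcome-dichotomy}, $(m,n)_q$-Tic-Tac-Toe is drawing.

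Finally, the drawing conclusion means $(m,n)_q$ is not winning. Monotonicity (Proposition~\ref{prop:monotonicity-m}), or equivalently the threshold remark following Definition~\ref{def:threshold}, shows that no $m'\le m$ is winning either: if some $m'\le m$ were winning, then $m$ would be winning too. Since $T(n,q)$ exists by Section~\ref{sec:ramsey-upper-bound}, we conclude $T(n,q)>m$.

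There is no genuine obstacle here. The only point needing care is the bookkeeping in the inequality, namely the factor $2$ that turns the threshold $\tfrac12$ into the exponent $q^n-1$. One should also note that Breaker's strategy transfers to the strong game even though $P_2$ might incidentally complete a subspace first, which only helps $P_2$.
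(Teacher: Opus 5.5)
Your proposal is correct and matches the paper's proof: compute $\sum_{W\in\mathcal W}2^{-|W|}=|\mathcal W|\,2^{-q^n}$, apply the Erd\H{o}s--Selfridge criterion to get a Breaker win, and transfer it to the strong game via Remark~\ref{rem:maker-breaker}\ref{item:breaker-implies-draw}. Your use of Proposition~\ref{prop:monotonicity-m} to justify $T(n,q)>m$ spells out a step the paper leaves as an unexplained ``in particular.''
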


\begin{proof}
Since each winning set has size $q^n$, we have
$$
\sum_{W \in \mathcal W} 2^{-|W|}
= |\mathcal W| \cdot 2^{-q^n}.
$$
The Erdős-Selfridge criterion implies that if
$$
|\mathcal W| \cdot 2^{-q^n} < \frac12,
$$
that is,
$$
|\mathcal W| < 2^{q^n-1},
$$
then Breaker has a winning strategy.
By Remark~\ref{rem:maker-breaker}\ref{item:breaker-implies-draw}, this gives
a drawing strategy for $P_2$ in the strong game. Hence
$(m,n)_q$-Tic-Tac-Toe is drawing.
\end{proof}

\subsection{An explicit numerical bound}

In order to turn the Erdős-Selfridge condition into an explicit inequality on $m$,
we use a standard estimate for the Gaussian binomial coefficient.

Recall the product formula
$$
\binom{m}{n}_q
=
\prod_{i=0}^{n-1}\frac{q^{m}-q^{i}}{q^{n}-q^{i}}
=
\prod_{i=0}^{n-1} q^{m-n}\cdot
\frac{1-q^{i-m}}{1-q^{i-n}}.
$$

\begin{lemma}\label{lem:gaussian-upper}
For all integers $m\ge n\ge 1$ and all prime powers $q\ge 2$,
$$
\binom{m}{n}_q
\le
\left(\frac{q}{q-1}\right)^n\, q^{n(m-n)}.
$$
Consequently, the number $\mathcal W$ of affine subspaces of dimension $n$ in $\F_q^m$
satisfies
$$
|\mathcal W|
=
q^{m-n}\binom{m}{n}_q
\le
\left(\frac{q}{q-1}\right)^n\, q^{(n+1)(m-n)}.
$$
\end{lemma}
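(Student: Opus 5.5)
We bound the product formula displayed just before the statement factor by factor. Writing
\[
\binom{m}{n}_q=q^{n(m-n)}\prod_{i=0}^{n-1}\frac{1-q^{i-m}}{1-q^{i-n}},
\]
it suffices to show that each factor $\frac{1-q^{i-m}}{1-q^{i-n}}$ is at most $\frac{q}{q-1}$ for $0\le i\le n-1$. The numerator satisfies $1-q^{i-m}\le 1$ trivially, since $q^{i-m}>0$. For the denominator, $i\le n-1$ gives $i-n\le -1$, so $q^{i-n}\le q^{-1}$ and therefore $1-q^{i-n}\ge 1-\frac1q=\frac{q-1}{q}>0$. Hence each factor is at most $\frac{q}{q-1}$, and multiplying the $n$ factors yields
\[
\binom{m}{n}_q\le\left(\frac{q}{q-1}\right)^n q^{n(m-n)}.
\]

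For the consequence, we invoke Proposition~\ref{prop:number-affine-subspaces}, which gives $|\mathcal W|=q^{m-n}\binom{m}{n}_q$. Multiplying the bound above by $q^{m-n}$ combines the exponents as $q^{m-n}\cdot q^{n(m-n)}=q^{(n+1)(m-n)}$, which is exactly the stated estimate.

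There is no real obstacle. The only point needing care is verifying the factorisation $\frac{q^m-q^i}{q^n-q^i}=q^{m-n}\frac{1-q^{i-m}}{1-q^{i-n}}$, which follows by pulling $q^m$ out of the numerator and $q^n$ out of the denominator, and checking that all denominators are positive. The latter holds because $i<n$ forces $q^n-q^i>0$.
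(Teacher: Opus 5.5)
Your proof is correct and is essentially the paper's argument: bound each numerator $1-q^{i-m}$ by $1$ and each denominator $1-q^{i-n}$ below by $\frac{q-1}{q}$, then multiply by $q^{m-n}$ using Proposition~\ref{prop:number-affine-subspaces}. One detail is worth stating explicitly, as the paper does: each numerator is also positive ($0<1-q^{i-m}$ since $i<m$), and this nonnegativity of the factors is what justifies multiplying the factorwise upper bounds.
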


\begin{proof}
Starting from the product formula, we bound each factor as follows.
First, since $m\ge n$, we have $0<1-q^{i-m}\le 1$ for every $0\le i\le n-1$, hence
$$
\prod_{i=0}^{n-1}(1-q^{i-m}) \le 1.
$$
Second, for $0\le i\le n-1$ we have $i-n\le -1$, so $q^{i-n}\le q^{-1}$ and therefore
$$
1-q^{i-n} \ge 1-q^{-1}=\frac{q-1}{q}.
$$
It follows that
$$
\prod_{i=0}^{n-1}(1-q^{i-n})
\ge
\left(\frac{q-1}{q}\right)^n,
\qquad\text{hence}\qquad
\prod_{i=0}^{n-1}\frac{1}{1-q^{i-n}}
\le
\left(\frac{q}{q-1}\right)^n.
$$
Combining these estimates with the product formula gives
$$
\binom{m}{n}_q
=
q^{n(m-n)}
\prod_{i=0}^{n-1}\frac{1-q^{i-m}}{1-q^{i-n}}
\le
q^{n(m-n)}\left(\frac{q}{q-1}\right)^n,
$$
as claimed. Multiplying by $q^{m-n}$ yields the bound on $|\mathcal W|$.
\end{proof}

We can now derive an explicit sufficient condition for the game to be drawing.

\begin{corollary}\label{cor:ES-explicit}
If
\begin{equation}\label{eq:ES-explicit}
\left(\frac{q}{q-1}\right)^n\, q^{(n+1)(m-n)} < 2^{q^n-1},
\end{equation}
then $(m,n)_q$-Tic-Tac-Toe is drawing. Equivalently, it is drawing whenever
\begin{equation}\label{eq:ES-m-bound}
m<n+\frac{q^n-1-n\log_2\!\big(\frac{q}{q-1}\big)}{(n+1)\log_2 q}.
\end{equation}
\end{corollary}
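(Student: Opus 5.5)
The plan is to chain Lemma~\ref{lem:gaussian-upper} with Theorem~\ref{thm:drawing_2qn-1}, and then verify the equivalence of \eqref{eq:ES-explicit} and \eqref{eq:ES-m-bound} by taking logarithms.

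\textbf{First claim.} Lemma~\ref{lem:gaussian-upper} gives
\[
q^{m-n}\binom{m}{n}_q=|\mathcal W|\le \left(\frac{q}{q-1}\right)^n q^{(n+1)(m-n)} .
\]
If \eqref{eq:ES-explicit} holds, transitivity yields $q^{m-n}\binom{m}{n}_q<2^{q^n-1}$. This is exactly the hypothesis of Theorem~\ref{thm:drawing_2qn-1}, which then says the game is drawing. Nothing more is needed here.

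\textbf{Second claim.} Both sides of \eqref{eq:ES-explicit} are positive, and $\log_2$ is strictly increasing. So \eqref{eq:ES-explicit} is equivalent to
\[
n\log_2\!\Big(\frac{q}{q-1}\Big)+(n+1)(m-n)\log_2 q< q^n-1 .
\]
Since $q\ge 2$, we have $(n+1)\log_2 q>0$. Subtracting the first term and dividing by $(n+1)\log_2 q$ preserves the strict inequality. This gives
\[
m-n<\frac{q^n-1-n\log_2\big(\frac{q}{q-1}\big)}{(n+1)\log_2 q},
\]
which is \eqref{eq:ES-m-bound}. Every step is reversible, so the two conditions are genuinely equivalent. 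Hence \eqref{eq:ES-m-bound} also implies that the game is drawing.

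There is no real obstacle. The only points to check are that the positivity of $\log_2 q$ justifies the division, and that "equivalently" refers to the equivalence of the two sufficient conditions, not to the drawing property itself.
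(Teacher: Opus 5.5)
Your proposal is correct and follows the paper's proof: combine Lemma~\ref{lem:gaussian-upper} with Theorem~\ref{thm:drawing_2qn-1} to get the first claim, then take $\log_2$ of both sides and divide by $(n+1)\log_2 q>0$ to obtain \eqref{eq:ES-m-bound}. You also note explicitly that each step is reversible and that the division is justified by $\log_2 q>0$, which the paper leaves implicit.
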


\begin{proof}
Combining Theorem~\ref{thm:drawing_2qn-1} with Lemma~\ref{lem:gaussian-upper}, we see that
the game is drawing whenever
$$
\left(\frac{q}{q-1}\right)^n q^{(n+1)(m-n)}<2^{q^n-1},
$$
which is precisely condition~\eqref{eq:ES-explicit}.

Applying $\log_2$ to both sides of \eqref{eq:ES-explicit} gives
$$
(n+1)(m-n)\log_2 q+n\log_2\!\Big(\frac{q}{q-1}\Big)<q^n-1.
$$
Solving for $m$ yields
$$
m<n+\frac{q^n-1-n\log_2\!\big(\frac{q}{q-1}\big)}
{(n+1)\log_2 q},
$$
which is \eqref{eq:ES-m-bound}.
\end{proof}

\begin{corollary}\label{cor:ES-final}
For every $n\ge 1$ and every prime power $q\ge 2$,
if
$$
m\le n+\left\lceil
\frac{q^n-1-n\log_2\!\big(\frac{q}{q-1}\big)}{(n+1)\log_2 q}
\right\rceil-1,
$$
then $(m,n)_q$-Tic-Tac-Toe is drawing.
In particular,
$$
T(n,q)
\ge n+\left\lceil
\frac{q^n-1-n\log_2\!\big(\frac{q}{q-1}\big)}{(n+1)\log_2 q}
\right\rceil.
$$
\end{corollary}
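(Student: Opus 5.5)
The plan is to reduce the statement to Corollary~\ref{cor:ES-explicit}, which already gives a drawing strategy whenever $m<n+R$, where we abbreviate
\[
R:=\frac{q^n-1-n\log_2\!\big(\frac{q}{q-1}\big)}{(n+1)\log_2 q}.
\]
So the only thing to check is that the hypothesis $m\le n+\lceil R\rceil-1$ implies the strict inequality $m<n+R$, followed by a translation into the threshold statement via Definition~\ref{def:threshold} and Proposition~\ref{prop:monotonicity-m}.

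For the first step, $m-n$ is an integer with $m-n\le \lceil R\rceil -1$. Since $\lceil R\rceil-1<R$ holds for every real $R$ (by definition $\lceil R\rceil<R+1$), we get $m-n<R$, i.e.\ $m<n+R$. Corollary~\ref{cor:ES-explicit} then shows that $(m,n)_q$-Tic-Tac-Toe is drawing. This step needs no sign assumption on $R$. If $R\le 0$, the range of admissible $m\ge n$ may be empty and the statement is vacuous; this is harmless.

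For the threshold bound, set $m_0=n+\lceil R\rceil-1$. By the first step, every $m$ with $n\le m\le m_0$ gives a drawing game. Since $T(n,q)$ is by definition the least $m$ for which the game is winning, and $T(n,q)\ge n$ trivially, we get $T(n,q)\ge m_0+1=n+\lceil R\rceil$. When $m_0<n$, the desired inequality $T(n,q)\ge n+\lceil R\rceil$ is implied by $T(n,q)\ge n$, because in that case $n+\lceil R\rceil\le n$. Existence of $T(n,q)$ is supplied by Proposition~\ref{prop:ramsey-game-upper}.

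I do not expect a genuine obstacle. The only point requiring care is the integer-rounding step, namely using the strict inequality $\lceil R\rceil-1<R$ rather than $\le$. Without it, the non-strict hypothesis would not match the strict condition~\eqref{eq:ES-m-bound}.
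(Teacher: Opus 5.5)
Your proposal is correct and follows the paper's route. The paper gives no separate proof: it treats this corollary as an immediate consequence of Corollary~\ref{cor:ES-explicit}, since for the integer $m-n$ the hypothesis $m-n\le\lceil R\rceil-1$ is equivalent to $m-n<R$, and the bound on $T(n,q)$ then follows from Definition~\ref{def:threshold}. Your rounding step and your treatment of the degenerate case $\lceil R\rceil\le 0$ are fine; that case occurs only for $(n,q)=(1,2)$, where $R=0$, and the appeal to Proposition~\ref{prop:monotonicity-m} is not needed.
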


\section{Pairing strategies and a geometric lower bound}\label{sec:geom-draw}

In this section we will construct some drawing strategies for $P_2$ using geometric properties.

\begin{definition}\label{def:s-close}
Let $V=\F_q^m$, and let $A_1,A_2\subseteq V$ be the sets of points claimed by
$P_1$ and $P_2$, respectively, in a given position of the game.
Let $S\subseteq V$ be an affine subspace of dimension $n$, and let $s\ge 0$ be
an integer.

We say that $S$ is $s$-close for $P_1$ if
\[
|A_1\cap S|=q^n-s.
\]
Equivalently, $S$ is $s$-close for $P_1$ if $P_1$ is missing exactly $s$ points
of $S$.

We say that an $s$-close affine $n$-subspace $S$ is \emph{unblocked} if
\[
S\cap A_2=\emptyset.
\]
Thus an unblocked $1$-close affine $n$-subspace is precisely an immediate
winning threat for $P_1$.
\end{definition}

\begin{lemma}\label{lem:fork-necessity}
If $P_1$ has a winning strategy in $(m,n)_q$-Tic-Tac-Toe, then there exists
a position in which it is the turn of $P_2$ to move and there are two distinct
affine subspaces $S_1,S_2\subseteq \F_q^m$ of dimension $n$ such that

\begin{itemize}
    \item $S_1$ and $S_2$ are unblocked $1$-close subspaces for $P_1$;
    \item the unique missing points of $S_1$ and $S_2$ are distinct;
    \item $S_1\cap S_2\neq\emptyset$.
\end{itemize}

In particular, $S_1$ and $S_2$ are not parallel.
\end{lemma}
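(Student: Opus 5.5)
The plan is to fix a winning strategy $\tau$ for $P_1$ and play it against one specific \emph{greedy blocking} strategy for $P_2$. The desired position will then appear somewhere in the resulting play.

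The strategy for $P_2$ is as follows. Whenever it is $P_2$'s turn and there is at least one unblocked $1$-close affine $n$-subspace for $P_1$, $P_2$ chooses one such subspace and claims its missing point. Otherwise $P_2$ claims an arbitrary unclaimed point. Since $\tau$ is winning, the play of $\tau$ against this strategy ends with $P_1$ completing some affine $n$-subspace $S$.

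Call a position where it is $P_2$'s turn \emph{bad} if it contains two unblocked $1$-close subspaces whose missing points are distinct. The first step is to show that a bad position occurs. Consider $P_2$'s last turn before $P_1$'s winning move; it exists because $|S|=q^n\ge 2$, so $P_1$ needs at least two moves. A move by $P_2$ never adds points to $A_1$. Hence $S$ was already $1$-close and unblocked at that turn, with some missing point $x$. Also $P_2$ did not claim $x$, since $x$ is still free when $P_1$ wins with it. But $P_2$ does block whenever a threat exists, so it claimed the missing point $y\neq x$ of some other threat $S'$. Therefore that position is bad.

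Now take the \emph{first} bad position in the play. At every earlier turn of $P_2$, all unblocked $1$-close subspaces shared a single missing point, and $P_2$ claimed it. So right after $P_2$'s previous move there were no unblocked $1$-close subspaces at all. At the very first turn of $P_2$ this is also true: $P_1$ holds only one point and $q^n\ge 2$.

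Let $z$ be the single move $P_1$ made in between. Every unblocked $1$-close subspace $T$ in the first bad position already had empty intersection with $A_2$ before $z$ was played. Before $z$ it was therefore missing at least two points of $A_1$, and after $z$ it is missing exactly one. Hence $z\in T$.

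Now pick two threats $S_1,S_2$ in this position with distinct missing points. They are distinct subspaces, and both contain $z$, so $S_1\cap S_2\neq\emptyset$. Two distinct affine subspaces of the same dimension that intersect cannot be parallel, since parallel distinct cosets are disjoint. This gives the final assertion.

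The main subtlety is the intersection condition. A bad position chosen naively, such as the one just before $P_1$'s win, may have disjoint threats. The fix is minimality, which forces every threat to have been created by the single last move $z$. One further point needs care: $P_2$'s greedy play might in principle give $P_2$ a win of its own. This cannot happen here, because $\tau$ is winning against every strategy of $P_2$.
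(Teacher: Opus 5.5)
Your proof is correct, but it locates the critical position by a different mechanism than the paper.

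The paper works with the forcing depth $d(\Pi)$, the minimum number of $P_1$-moves needed to force a win from a winning position $\Pi$. It takes a reachable position with $P_1$ to move and $d(\Pi)=2$, and lets $x$ be a move realizing this. After $x$ there must be two threats with distinct missing points, since otherwise $P_2$ could parry. Because $d(\Pi)\neq 1$, no threat existed before $x$, so all threats contain $x$.

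You instead fix one explicit play of the winning strategy against the greedy blocker. You prove a fork occurs in it by looking at $P_2$'s last turn, and then take the first fork. In your version, the fact that there were no threats just before $P_1$'s last move $z$ comes from the greedy rule having cleared every earlier non-fork position, not from $d(\Pi)\neq 1$. The closing step is the same in both: every threat contains the last move of $P_1$, hence $S_1\cap S_2\neq\emptyset$.

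The paper's route needs no auxiliary strategy for $P_2$, and the existence of a fork comes for free from $d(\Pi)=2$. However, it tacitly relies on the depth actually taking the value $2$ along some play. That requires checking that $d$ drops by exactly one per round when $P_2$ replies so as to maximize it. Your first-occurrence argument avoids this minimax bookkeeping and is fully explicit.

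One small wording point concerns the case where the first bad position is $P_2$'s very first turn. What you need there is that before $z$ the set $A_1$ is empty, so no subspace is $1$-close because $q^n-1\ge 1$. The sentence ``$P_1$ holds only one point and $q^n\ge 2$'' should be read that way. It should not be read as saying there are no threats at $P_2$'s first turn: for $q^n=2$, i.e.\ $(m,1)_2$, that turn can itself be the first bad position.
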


\begin{proof}
Assume that $P_1$ has a winning strategy.
For every winning position $\Pi$ for $P_1$, let $d(\Pi)$ denote the minimum
number of future moves of $P_1$ needed to force a win from $\Pi$.

Since the initial position is winning for $P_1$ and the game is finite,
there exists a reachable position $\Pi$ with $P_1$ to move and $d(\Pi)=2$.
Let $x$ be a move witnessing this, and let $\Pi'$ be the position obtained
after $P_1$ claims $x$. Then it is the turn of $P_2$ to move, and from $\Pi'$
player $P_1$ can force a win on his next move, regardless of how $P_2$ replies.

Therefore, in $\Pi'$ there must exist at least two distinct unblocked affine
$n$-subspaces $S_1,S_2$ that are $1$-close for $P_1$ and whose missing points
are distinct. Indeed, if there were at most one such subspace, then $P_2$
could claim its unique missing point, and $P_1$ would have no winning move on
the next turn.

Moreover, before the move $x$ was played, there cannot have existed any
unblocked affine $n$-subspace already $1$-close for $P_1$, since otherwise
$P_1$ would have had an immediate winning move and we would have had
$d(\Pi)=1$, a contradiction. Hence every unblocked affine $n$-subspace that is
$1$-close for $P_1$ in $\Pi'$ must contain the newly played point $x$.
In particular,
\[
x\in S_1\cap S_2,
\]
so $S_1\cap S_2\neq\emptyset$.

Finally, two distinct parallel affine subspaces are disjoint, so $S_1$ and
$S_2$ are not parallel.
\end{proof}

\begin{theorem}\label{thm:lower-bound-nplus2}
Assume that $n\ge 2$.
If $m\le n+1$, then $(m,n)_q$-Tic-Tac-Toe is drawing.
In particular,
$$
T(n,q)\ge n+2.
$$
\end{theorem}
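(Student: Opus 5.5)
The plan is to reduce to the single case $m=n+1$ and then use two different arguments depending on $q$. Since $(m,n)_q$ drawing for $m=n+1$ implies drawing for all smaller $m\ge n$ (the contrapositive of Proposition~\ref{prop:monotonicity-m}), it suffices to show that $(n+1,n)_q$ is drawing. In $\F_q^{n+1}$ the winning sets are exactly the affine hyperplanes. By Proposition~\ref{prop:number-affine-subspaces} there are
\[
|\mathcal W| = q\cdot\frac{q^{n+1}-1}{q-1}
\]
of them, each of size $q^n$.

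\textbf{Large $q$: Erd\H{o}s--Selfridge.} Theorem~\ref{thm:drawing_2qn-1} gives a draw as soon as $q(q^{n+1}-1)/(q-1)<2^{q^n-1}$. For $q\ge 3$ and $n\ge 2$ the left side is at most roughly $\tfrac32 q^{n+1}$. The right side is at least $2^{q^2-1}\ge 2^8$ already when $q=3,n=2$ (there $39<256$), and it grows doubly exponentially. So a routine monotonicity check in $n$ and $q$ settles every $q\ge 3$. The check fails only for $q=2$: for $n=2$ we get $14\not<8$, and $q=2$ must be handled separately for all $n$.

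\textbf{Small $q$: counting forks.} For $q\in\{2,3\}$ (so in particular $q=2$) I would use Lemma~\ref{lem:fork-necessity}. Suppose $P_1$ had a winning strategy. Then some reachable position with $P_2$ to move contains two distinct unblocked $1$-close hyperplanes $S_1,S_2$ that meet. Non-parallel hyperplanes in $\F_q^{n+1}$ meet in an $(n-1)$-plane, so
\[
|S_1\cup S_2|=2q^n-q^{n-1}.
\]
The counting then runs in three steps.
\begin{itemize}
\item $P_1$ owns all of $S_1\cup S_2$ except the two missing points, so $|A_1|\ge 2q^n-q^{n-1}-2$.
\item Since it is $P_2$'s turn, $|A_2|=|A_1|$.
\item Unblockedness forces $A_2$ into the complement of $S_1\cup S_2$, which has size $q^{n+1}-2q^n+q^{n-1}=q^{n-1}(q-1)^2$.
\end{itemize}
A contradiction therefore follows whenever
\[
q^{n-1}(q-1)^2<2q^n-q^{n-1}-2,\qquad\text{i.e.}\qquad q^{n-1}(q^2-4q+2)<-2 .
\]
This holds for $q=2$ (the left side is $-2^n$ and $n\ge2$) and for $q=3$ (the left side is $-3^{n-1}$). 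Notably it requires no cleverness from $P_2$ at all: no play can ever reach a fork, so by the lemma $P_1$ cannot have a winning strategy, and by Remark~\ref{rem:outcome-dichotomy} the game is drawing. The hypothesis $n\ge 2$ is exactly what makes the $q=2$ inequality strict.

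\textbf{Main obstacle.} The main risk is in applying Lemma~\ref{lem:fork-necessity} correctly. I must make sure the fork position is reachable in an actual play, which is what fixes $|A_2|=|A_1|$. I must also make sure that a position where $P_2$ has already won is not an issue; it is not, since then $P_1$ has not won. Both points are fine because the lemma produces a position along a play consistent with $P_1$'s winning strategy.

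If the Erd\H{o}s--Selfridge inequality turned out delicate at some boundary case, the fallback is the observation that the two arguments overlap at $q=3$. The fork count handles $q=3$ as well, so only $q\ge4$ truly needs Theorem~\ref{thm:drawing_2qn-1}, and there the margin is enormous.
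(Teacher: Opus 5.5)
Your route differs from the paper's, and after one correction it is complete.

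\textbf{The correction.} In a reachable position with $P_2$ to move, $P_1$ has made one more move than $P_2$. So $|A_2|=|A_1|-1$, not $|A_2|=|A_1|$; this is also what reachability fixes in your ``main obstacle'' paragraph. The contradiction condition therefore becomes
\[
q^{n-1}(q^2-4q+2)<-3 .
\]
\begin{itemize}
\item For $q=2$ it reads $2^n>3$, so your $q=2$ case survives for every $n\ge 2$. For $n\ge 3$ it is even immediate, since $|A_1|\le 2^n<3\cdot 2^{n-1}-2$.
\item For $q=3$ it reads $3^{n-1}>3$, which fails at $n=2$. In $\F_3^3$ the count permits $|A_1|=13$ and $|A_2|=12$, with $A_2$ exactly the complement of $S_1\cup S_2$.
\end{itemize}
So your fallback claim that the fork count also settles $q=3$ is false at $(q,n)=(3,2)$. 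There you genuinely need Erd\H{o}s--Selfridge, which does work ($39<256$), so nothing is lost. Conversely, with the exact hyperplane count, Theorem~\ref{thm:drawing_2qn-1} also covers $q=2$, $n\ge 3$, since $2(2^{n+1}-1)<2^{2^n-1}$. Only $(q,n)=(2,2)$ truly needs the fork count.

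\textbf{Comparison with the paper.} The paper treats $m=n+1$ uniformly in $q$ with an explicit strategy for $P_2$:
\begin{itemize}
\item block any immediate threat;
\item otherwise answer $x$ by $x+v$ if $q$ is even, or by $-x$ if $q$ is odd.
\end{itemize}
It then argues that after the first threat $A$, the parallel hyperplane $C=A+v$ (or $C=-A$) is almost entirely held by $P_2$. Hence every later threat must be parallel to $C$, which excludes the intersecting fork of Lemma~\ref{lem:fork-necessity}.

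You replace this with two strategy-free arguments:
\begin{itemize}
\item for $q\ge 3$, the potential criterion, which even gives a Breaker win in the Maker--Breaker game on $\F_q^{n+1}$;
\item for $q=2$, the observation that no reachable position contains a fork at all, because the complement of two meeting hyperplanes is too small to hold $P_2$'s points.
\end{itemize}

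What each buys: the paper's route yields an explicit geometric strategy and avoids numerics. Yours is shorter and more robust in the smallest case. The paper claims $P_2$ holds all but at most one point of $C=A+v$. This is too optimistic: both $x+v$ (for the move $x$ that created $A$) and $p+v$ (for the missing point $p$ of $A$) can be free. For example, in $\F_2^3$ with $v=001$:
\begin{itemize}
\item $P_1$ plays $000,100,010$ and $P_2$ answers $001,101$;
\item this leaves $011,111\in C$ free;
\item $P_1$'s move $011$ then creates the threat $\{000,100,011,111\}$, which is not parallel to $C$.
\end{itemize}
This matters exactly when $|S\cap C|=q^{n-1}=2$, i.e.\ at $(q,n)=(2,2)$. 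There your counting argument needs no such bookkeeping.
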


\begin{proof}
If $m=n$, then the only affine subspace of dimension $n$ is the whole board
$\F_q^n$, so $P_1$ cannot occupy all its points before $P_2$ claims at least
one point. Hence $(n,n)_q$ is drawing.

It remains to consider the case $m=n+1$.

We split according to the parity of $q$.

\medskip

\textbf{Case 1: $q$ even.}
Fix a nonzero vector $v\in \F_q^{n+1}$, and define
$$
\tau:\F_q^{n+1}\longrightarrow \F_q^{n+1}, \qquad \tau(x)=x+v.
$$
Since $q$ is even, we have $2v=0$, so $\tau$ is a (fixed-point-free) involution.

At each stage, let $A_i$ denote the set of points already claimed by $P_i$.
After each move of $P_1$, player $P_2$ proceeds as follows.
Let $x$ be the point just claimed by $P_1$.

\begin{itemize}
    \item If there exists an unblocked affine $n$-plane $S$ that is $1$-close
    for $P_1$, then $P_2$ claims its unique missing point.

    \item If no such affine $n$-plane exists and $\tau(x)=x+v$ is available,
    then $P_2$ claims $\tau(x)$.

    \item If no such affine $n$-plane exists and $\tau(x)=x+v$ is already
    claimed, then $P_2$ claims any available point.
\end{itemize}

Assume for contradiction that $P_1$ has a winning strategy.
Let $A$ be the first affine $n$-subspace that becomes unblocked and $1$-close
for $P_1$, and let $P_2$ block it.
Set
\[
C=A+v.
\]
Then $C$ is an affine $n$-subspace parallel to $A$.

We claim that, at this moment, $P_2$ occupies all but at most one point of
$C$. Indeed, before $A$ became the first unblocked $1$-close subspace, the
blocking rule had never been used. Hence each earlier move $a$ of $P_1$ in
$A$ was answered by the pairing rule. If $a+v$ was available, then $P_2$
claimed it; if $a+v$ was already claimed, then it was already claimed by
$P_2$. It cannot have been claimed by $P_1$, because then, when $P_1$ had
previously claimed $a+v$, the pairing rule would have forced $P_2$ to claim
$a$. Thus $a+v\in A_2$ for every earlier point $a\in A\cap A_1$.
The only possible exception in $C$ comes from the last move of $P_1$, namely
the move that made $A$ unblocked and $1$-close. Therefore $P_2$ occupies all
but at most one point of $C$.

Now consider any later affine $n$-subspace $S$ that is unblocked and
$1$-close for $P_1$.
Since $m=n+1$, both $S$ and $C$ are affine hyperplanes in $\F_q^{n+1}$.
If $S$ was not parallel to $C$, then $S\cap C$ would be an affine subspace
of dimension $n-1$, hence
\[
|S\cap C|=q^{n-1}\ge 2,
\]
because $n,q\ge 2$.
Since $C$ has at most one point not claimed by $P_2$, it would follow that
$S$ contains a point of $P_2$, contradicting the fact that $S$ is unblocked.

Therefore every affine $n$-subspace that becomes unblocked and $1$-close for
$P_1$ must be parallel to $C$. But by Lemma~\ref{lem:fork-necessity}, any
winning strategy for $P_1$ would eventually create two distinct unblocked
$1$-close affine $n$-subspaces that are not parallel. This contradiction shows
that $P_1$ has no winning strategy. Since $P_2$ cannot have a winning strategy,
the game is drawing in the even case.

\medskip

\textbf{Case 2: $q$ odd.}
At each stage, let $A_i$ denote the set of points already claimed by $P_i$.
After each move of $P_1$, player $P_2$ proceeds as follows.
Let $x$ be the point just claimed by $P_1$.

\begin{itemize}
    \item If there exists an unblocked affine $n$-plane $S$ that is $1$-close
    for $P_1$, then $P_2$ claims its unique missing point.

    \item If no such affine $n$-plane exists and $x=0$, then $P_2$ claims any
    available point.

    \item If no such affine $n$-plane exists, $x\neq 0$, and $-x$ is
    available, then $P_2$ claims $-x$.

    \item If no such affine $n$-plane exists, $x\neq 0$, and $-x$ is already
    claimed, then $P_2$ claims any available point.
\end{itemize}

Assume for contradiction that $P_1$ has a winning strategy.
Let $A$ be the first affine $n$-subspace that becomes unblocked and $1$-close
for $P_1$, and let $P_2$ block it.
Set
\[
C=-A.
\]
Then $C$ is an affine $n$-subspace parallel to $A$.

We claim that, at this moment, $P_2$ occupies all but at most two points of
$C$. Indeed, before $A$ became the first unblocked $1$-close subspace, the
blocking rule had never been used. Hence each earlier nonzero move $a$ of
$P_1$ in $A$ was answered by the antipodal rule. If $-a$ was available, then
$P_2$ claimed it; if $-a$ was already claimed, then it was already claimed by
$P_2$. It cannot have been claimed by $P_1$, because then, when $P_1$ had
previously claimed $-a$, the antipodal rule would have forced $P_2$ to claim
$a$. Thus $-a\in A_2$ for every earlier nonzero point $a\in A\cap A_1$.
There are at most two possible exceptions in $C$: the point corresponding to
$a=0$, and the point corresponding to the last move of $P_1$, namely the move
that made $A$ unblocked and $1$-close. Therefore $P_2$ occupies all but at
most two points of $C$.

Now consider any later affine $n$-subspace $S$ that is unblocked and
$1$-close for $P_1$.
Again, since $m=n+1$, both $S$ and $C$ are affine hyperplanes. If $S$ was
not parallel to $C$, then $S\cap C$ would be an affine subspace of dimension
$n-1$, so
\[
|S\cap C|=q^{n-1}\ge 3,
\]
because $q\ge 3$ and $n\ge 2$.
Since $C$ has at most two points not claimed by $P_2$, it would follow that
$S$ contains a point of $P_2$, contradicting the fact that $S$ is unblocked.

Therefore every affine $n$-subspace that becomes unblocked and $1$-close for
$P_1$ must be parallel to $C$. By Lemma~\ref{lem:fork-necessity}, this is
incompatible with a winning strategy for $P_1$.

Hence $P_1$ has no winning strategy. Since $P_2$ cannot have a winning
strategy, the game is drawing.
\end{proof}

\begin{remark}
For $n=1$ we trivially have $T(1,q)\ge 2$: when $m=1$, the only affine
$1$-subspace is the whole board, so $P_1$ cannot occupy all $q$ points before
$P_2$ claims at least one.
\end{remark}

\begin{proposition}
Let $q=2$ and $n\in\{2,3\}$. Then the geometric lower bound
\[
T(n,2)\ge n+2
\]
is strictly stronger than the lower bound obtained from the
Erdős-Selfridge criterion.
\end{proposition}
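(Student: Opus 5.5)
We compare the two bounds directly by evaluating the Erdős--Selfridge lower bound of Corollary~\ref{cor:ES-final} at $q=2$ for $n=2$ and $n=3$, and checking that it is smaller than $n+2$.

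Since the explicit bound in Corollary~\ref{cor:ES-final} relies on the estimate of Lemma~\ref{lem:gaussian-upper}, we should also consider the sharper, exact criterion of Theorem~\ref{thm:drawing_2qn-1}. This avoids the objection that only a weakened form of Erdős--Selfridge was compared. Concretely, we find the largest $m$ with $2^{m-n}\binom{m}{n}_2<2^{2^n-1}$ and show it is at most $n$. In that case the criterion only certifies $T(n,2)\ge n+1$, while Theorem~\ref{thm:lower-bound-nplus2} gives $T(n,2)\ge n+2$.

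\emph{Case $n=2$.} Here $2^{2^n-1}=2^3=8$. At $m=2$ the count is $|\mathcal W|=1<8$, so $m=2$ is certified. At $m=3$ it is $|\mathcal W|=2\cdot\binom{3}{2}_2=2\cdot 7=14\ge 8$, so $m=3$ is not certified. Thus Erdős--Selfridge yields only $T(2,2)\ge 3$, whereas the geometric bound gives $T(2,2)\ge 4$.

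\emph{Case $n=3$.} Here $2^{2^n-1}=2^7=128$. At $m=4$ the count is $|\mathcal W|=2\cdot\binom{4}{3}_2=2\cdot 15=30<128$, so $m=4$ is certified. This gives $T(3,2)\ge 5=n+2$ from Erdős--Selfridge as well. At $m=5$ the count is $4\cdot\binom{5}{3}_2=4\cdot 155=620\ge 128$, so $m=5$ is not certified.

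So for $n=3$ the two bounds coincide when the exact criterion is used, and the claimed strict comparison can only hold against the explicit bound of Corollary~\ref{cor:ES-final}. We therefore compute that bound in both cases. At $q=2$ the term $\log_2\!\big(\frac{q}{q-1}\big)$ equals $1$ and $\log_2 q=1$, so the bound becomes
\[
T(n,2)\ge n+\left\lceil \frac{2^n-1-n}{n+1}\right\rceil .
\]
For $n=2$ the fraction is $1/3$, so the bound is $2+1=3<4$. For $n=3$ it is $4/4=1$, so the bound is $3+1=4<5$.

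The statement thus holds in the sense of Corollary~\ref{cor:ES-final}, which is the bound the paper labels as ``the lower bound obtained from the Erdős--Selfridge criterion''. The one delicate point is exactly this interpretation: we state explicitly that the comparison is with Corollary~\ref{cor:ES-final}, and we remark that for $n=3$ the exact criterion of Theorem~\ref{thm:drawing_2qn-1} already reaches $n+2$, while for $n=2$ the geometric bound is strictly stronger under either reading.
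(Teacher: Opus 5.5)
Your proposal is correct and is essentially the paper's argument, which is only a direct comparison of Theorem~\ref{thm:lower-bound-nplus2} with the explicit bound of Corollary~\ref{cor:ES-final} at $q=2$, giving $3<4$ for $n=2$ and $4<5$ for $n=3$. Your caveat is also accurate: the unestimated criterion of Theorem~\ref{thm:drawing_2qn-1} already certifies $(4,3)_2$ as drawing because $2\binom{4}{3}_2=30<2^7$, so for $n=3$ the strict comparison holds only against Corollary~\ref{cor:ES-final}, and that is indeed the bound the paper means (cf.\ Corollary~\ref{cor:Tn2-exponential}).
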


\begin{proof}
This follows by direct comparison of the two lower bounds.
\end{proof}

\section{Upper bounds for $q=2$ via Fourier analysis}\label{sec:fourier}
 
In this section we establish explicit upper bounds on $T(n,2)$ by a direct
argument, independent of the general affine Ramsey machinery.
The argument combines a Fourier-analytic bound on the maximum size of a
$2$-plane-free set in $\F_2^m$ with an inductive step based on quotients by
$2$-dimensional subspaces.

Throughout this section we write $N=2^m$. For a set $S\subseteq \F_2^m$, we
denote by $\mathbf{1}_S$ its characteristic function, namely
$$
\mathbf{1}_S(x)=
\begin{cases}
1, & \text{if } x\in S,\\
0, & \text{if } x\notin S.
\end{cases}
$$
We set $g=\mathbf{1}_S$. The Fourier transform of $g$ is
$$
\widehat g(t)=\sum_{x\in \F_2^m} g(x)(-1)^{t\cdot x}
=\sum_{x\in S}(-1)^{t\cdot x},
\qquad t\in \F_2^m,
$$
where $t\cdot x=\sum_i t_i x_i\in \F_2$.
We refer to \cite[Chapter~4]{TaoVu2006} for the general theory of
Fourier analysis on finite abelian groups.

\subsection{The Fourier bound for $2$-planes}
 
\begin{definition}
A subset $S\subseteq \F_2^m$ is called \textit{$n$-plane-free} if $S$
contains no affine subspace of dimension~$n$.
We denote by $\mathrm{ex}(m,n)$ the maximum cardinality of an $n$-plane-free
subset of~$\F_2^m$.
\end{definition}
 
\begin{lemma}\label{lem:fourier-2plane}
For every $m\ge 2$,
$$
\mathrm{ex}(m,2)\le \sqrt{3}\cdot 2^{m/2}.
$$
Equivalently, every subset $S\subseteq\F_2^m$ with $|S|>\sqrt{3}\cdot 2^{m/2}$
contains an affine $2$-plane.
\end{lemma}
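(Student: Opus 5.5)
The plan is to count additive quadruples in $S$ with the Fourier transform and observe that, over $\F_2$, any nontrivial additive quadruple is an affine $2$-plane. Two elementary facts make this work.

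First, I would check that in $\F_2^m$ an affine $2$-plane is exactly a set $\{a,b,c,d\}$ of four distinct points with $a+b+c+d=0$. Every affine $2$-plane $\{a,a+u,a+w,a+u+w\}$ has this form. Conversely, if $a,b,c$ are distinct, then $u=b-a$ and $w=c-a$ are distinct and nonzero, so they are linearly independent over $\F_2$. Then $d=a+b+c=a+u+w$ completes the plane, and $d$ is distinct from $a,b,c$.

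Second, I would look at solutions of $x_1+x_2+x_3+x_4=0$ in $S^4$ whose entries are not all distinct. If two entries coincide, they cancel in characteristic $2$, which forces the other two to coincide as well. So such solutions come from the three pairings $\{x_1=x_2,\,x_3=x_4\}$, $\{x_1=x_3,\,x_2=x_4\}$, $\{x_1=x_4,\,x_2=x_3\}$. Each pairing gives $|S|^2$ solutions, so by a union bound there are at most $3|S|^2$ trivial solutions.

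Next, with $g=\mathbf 1_S$, orthogonality of characters gives
\[
Q:=\#\{(x_1,x_2,x_3,x_4)\in S^4 : x_1+x_2+x_3+x_4=0\}=\frac1N\sum_{t\in\F_2^m}\widehat g(t)^4 .
\]
Since $\widehat g(t)$ is real, every term is nonnegative. Keeping only $t=0$, where $\widehat g(0)=|S|$, yields $Q\ge |S|^4/N$.

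Now suppose $S$ is $2$-plane-free. By the first fact, every solution counted by $Q$ is trivial, so $|S|^4/N\le Q\le 3|S|^2$. Dividing by $|S|^2$ gives $|S|^2\le 3N=3\cdot 2^m$, which is the claimed bound; if $S=\emptyset$ there is nothing to prove. The strict form in the \emph{Equivalently} clause is just the contrapositive.

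I do not expect a real obstacle here. The only point needing care is the characteristic-$2$ claim that any nontrivial additive quadruple is genuinely a $2$-plane: its four points are distinct and automatically affinely independent in the right sense. Once that is checked, the argument is the one-line Fourier lower bound together with the count of trivial solutions.
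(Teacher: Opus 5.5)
Your proposal is correct and follows the paper's proof essentially step for step: the Fourier identity for additive quadruples, the lower bound from the $t=0$ term, and the fact that in characteristic $2$ any quadruple in $S$ with four distinct entries summing to zero is an affine $2$-plane. The only difference is that you bound the degenerate quadruples by the union bound $3|S|^2$, whereas the paper counts them exactly as $3s^2-2s$; your version gives the non-strict $|S|\le\sqrt{3}\cdot 2^{m/2}$, which is exactly what the lemma states, while the paper's exact count gives a strict inequality.
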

 
\begin{proof}
Let $S\subseteq\F_2^m$ with $|S|=s$.
An affine $2$-plane in $\F_2^m$ is a set of four distinct points $\{a,b,c,d\}\subseteq\F_2^m$
such that $a+b+c+d=0$. Then since $d=a+b+c$ we have $\{a,b,c,d\}=a+\mathrm{Span}\{b-a,\,c-a\}$.
 
We count the number $N_4$ of ordered quadruples $(a,b,c,d)\in S^4$ with $a+b+c+d=0$.
By the orthogonality of characters (see \cite[Lemma~4.5]{TaoVu2006}),
$$
N_4
=\sum_{\substack{a,b,c,d\in S\\a+b+c+d=0}}1
=\frac{1}{2^m}\sum_{t\in\F_2^m}\widehat{g}(t)^4.
$$
Since every term on the right-hand side is nonnegative and $\widehat g(0)=s$,
we obtain
$$
N_4 \ge \frac{\widehat g(0)^4}{2^m}=\frac{s^4}{2^m}.
$$
 
Among these $N_4$ quadruples, those with fewer than four distinct entries
(the \emph{degenerate} quadruples) are of the following types.
\begin{itemize}
\item \emph{All equal}: $(a,a,a,a)$ with $4a=0$, which holds automatically
in characteristic~$2$. This gives $s$ quadruples.
\item \emph{Two distinct values}: permutations of $(x,x,y,y)$ with $x\ne y$.
There are $\binom{s}{2}\cdot\frac{4!}{2!\,2!}=3s(s-1)$ such quadruples.
\item \emph{Three distinct values}: impossible, since if $a=b$ then $c+d=0$
forces $c=d$.
\end{itemize}
Hence the number of degenerate quadruples is $s+3s(s-1)=3s^2-2s$.
 
If $S$ is $2$-plane-free, then $N_4=3s^2-2s$.
Indeed, the only quadruples $(a,b,c,d)\in S^4$ with
$a+b+c+d=0$ are those in which the entries are paired: if four entries were
distinct, then over $\F_2$ they would form an affine $2$-plane. Hence
$$
\frac{s^4}{2^m}\le 3s^2-2s < 3s^2,
$$
giving $s^2<3\cdot 2^m$, that is, $s<\sqrt{3}\cdot 2^{m/2}$.
\end{proof}

\begin{remark}
The technique of counting solutions to linear equations via Fourier analysis and then subtracting degenerate solutions is standard in additive combinatorics; see \cite[Section~4.2]{TaoVu2006} and \cite{Green2005} for an overview.
The analogous bound for sets without $3$-term arithmetic progressions in $\F_3^m$ is due to Meshulam~\cite{Meshulam1995}, and its dramatic strengthening via the polynomial method is due to Ellenberg and Gijswijt~\cite{EllenbergGijswijt2017}, building on work of Croot, Lev, and Pach~\cite{CrootLevPach2017}.
The bound of Lemma~\ref{lem:fourier-2plane} is specific to $\F_2$, where it
relies on the fact that four distinct points summing to zero automatically
form an affine $2$-plane.
\end{remark}
 
\begin{remark}\label{rem:count-2planes}
The proof shows more: for any $S\subseteq\F_2^m$ with $|S|=s$, the number
of affine $2$-planes in $S$ is at least
\begin{equation}\label{eq:plane-count}
P(s,m):=\frac{1}{24}\Bigl(\frac{s^4}{2^m}-3s^2+2s\Bigr),
\end{equation}
since each $2$-plane gives rise to $4!=24$ ordered quadruples with four
distinct entries.
\end{remark}

\begin{corollary}\label{cor:T22}
$T(2,2)=4$.
\end{corollary}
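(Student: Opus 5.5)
The plan is to prove the two inequalities $T(2,2)\ge 4$ and $T(2,2)\le 4$ separately.

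\emph{Lower bound.} Theorem~\ref{thm:lower-bound-nplus2} with $n=2$ already says that $(m,2)_2$-Tic-Tac-Toe is drawing for every $m\le 3$. Hence $T(2,2)\ge 4$ with no further work.

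\emph{Upper bound: reduction to a coloring statement.} By Proposition~\ref{prop:ramsey-game-upper} and Remark~\ref{rem:maker-breaker}\ref{item:ramsey-implies-both}, it suffices to show that every $2$-coloring of $\F_2^4$ contains a monochromatic affine $2$-plane. Then no complete play on $\F_2^4$ can be a draw, and strategy stealing makes $(4,2)_2$ winning. In a $2$-coloring of the $16$ points, one color class $S$ has $|S|\ge 8$. So it is enough to show that every $S\subseteq\F_2^4$ with $|S|\ge 8$ contains an affine $2$-plane, i.e.\ $\mathrm{ex}(4,2)\le 7$.

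\emph{Upper bound: the counting step.} First I would try Lemma~\ref{lem:fourier-2plane}. It gives $\mathrm{ex}(4,2)<\sqrt3\cdot 4\approx 6.93$, so $\mathrm{ex}(4,2)\le 6<8$. As a check via Remark~\ref{rem:count-2planes}, take $s=8$ and $m=4$. Then $s^4/2^m=256$ and $3s^2-2s=176$, so
\[
P(8,4)=\frac{80}{24}>0.
\]
Thus a set of size $8$, and by monotonicity any larger set, contains at least one affine $2$-plane. This completes the upper bound.

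\emph{Where the difficulty lies.} The only point needing care is that the Fourier estimate must be sharp enough at $m=4$: we need the threshold $\sqrt{3}\cdot 2^{m/2}$ to fall below the pigeonhole size $2^{m-1}=8$. It does, since $\sqrt3\cdot 4<8$. If the constant had been worse, the fallback would be a direct structural argument: a $2$-plane-free set in $\F_2^4$ is a Sidon set, i.e.\ all pairwise sums are distinct, which gives $\binom{s}{2}\le 15$ and hence $s\le 6$, again below $8$. Combining the two bounds gives $T(2,2)=4$.
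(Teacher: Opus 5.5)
Your proposal is correct and follows the paper's proof. The lower bound comes from Theorem~\ref{thm:lower-bound-nplus2} with $n=2$, and the upper bound from Lemma~\ref{lem:fourier-2plane}: since $4\sqrt{3}<8=2^{4-1}$, the larger color class of any complete play on $\F_2^4$ contains an affine $2$-plane, so a draw is impossible and strategy stealing gives a first-player win. Your Sidon-set fallback ($\binom{s}{2}\le 15$, hence $s\le 6$) is also valid and even more elementary, though it is not needed.
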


\begin{proof}
For $m=4$ and $s=2^{m-1}=8$, we have
\[
8>\sqrt{3}\cdot 2^{4/2}.
\]
Thus, by Lemma~\ref{lem:fourier-2plane}, every subset of $\F_2^4$ of
cardinality $8$ contains an affine $2$-plane. Hence a draw in $(4,2)_2$ is
impossible, and therefore $T(2,2)\le 4$.
Combined with the lower bound $T(2,2)\ge 4$ from
Theorem~\ref{thm:lower-bound-nplus2}, we obtain $T(2,2)=4$.
\end{proof}

\subsection{Lifting $2$-planes to $n$-planes}
 
The key observation is that $n$-planes in $\F_2^m$ can be detected
by looking at $2$-planes and their directions.
 
\begin{definition}
Let $U\le \F_2^m$ be a $2$-dimensional subspace. For $S\subseteq\F_2^m$,
define the \emph{quotient set}
$$
Q_U(S)=\bigl\{\,\bar a\in \F_2^m/U : a+U\subseteq S\,\bigr\},
$$
the set of cosets of $U$ that are entirely contained in~$S$.
\end{definition}
 
\begin{lemma}\label{lem:pigeonhole-directions}
Let $S\subseteq\F_2^m$ with $|S|=s$, and let $G_m=\binom{m}{2}_2$
denote the number of $2$-dimensional subspaces of~$\F_2^m$.
Then there exists a $2$-dimensional subspace $U$ such that
$$
|Q_U(S)|\ge \frac{N_2(S)}{G_m}\ge \frac{P(s,m)}{G_m},
$$
where $N_2(S)$ denotes the number of affine $2$-planes contained in $S$.
\end{lemma}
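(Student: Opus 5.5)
The plan is a double count of the affine $2$-planes contained in $S$, sorted by their direction, followed by pigeonhole.

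Every affine $2$-plane in $\F_2^m$ has the form $a+U$ for a unique $2$-dimensional linear subspace $U$, its direction. For an affine plane $P=a+U$, the direction is recovered as $U=\{x-y: x,y\in P\}$, so the direction is well defined.

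Partition the family of affine $2$-planes contained in $S$ according to direction. For a fixed $U$, the affine planes with direction $U$ are exactly the cosets $a+U$. Such a plane lies in $S$ precisely when $\bar a\in Q_U(S)$, and distinct cosets give distinct planes. Hence the number of planes in $S$ with direction $U$ equals $|Q_U(S)|$. Summing over all $G_m$ two-dimensional subspaces gives
\[
N_2(S)=\sum_{U}|Q_U(S)|.
\]

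By averaging, some $U$ satisfies $|Q_U(S)|\ge N_2(S)/G_m$. This gives the first inequality.

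For the second inequality, invoke Remark~\ref{rem:count-2planes}, which gives $N_2(S)\ge P(s,m)$. Dividing by $G_m>0$ completes the chain.

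There is no real obstacle here. The only point needing care is the bijection between planes with direction $U$ contained in $S$ and elements of $Q_U(S)$. This relies on the facts that each affine plane determines its direction, and that two cosets of $U$ are either equal or disjoint. If $P(s,m)$ is negative, the inequality holds trivially.
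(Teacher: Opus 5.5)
Your proposal is correct and follows essentially the same argument as the paper. You sort the $2$-planes in $S$ by direction to get $N_2(S)=\sum_U |Q_U(S)|$, apply pigeonhole over the $G_m$ directions, and finish with Remark~\ref{rem:count-2planes}.
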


\begin{proof}
Each affine $2$-plane contained in $S$ is a coset $a+U$ for a unique
$2$-dimensional subspace $U$, and $a+U\subseteq S$ is equivalent to
$\bar a\in Q_U(S)$.
Hence $|Q_U(S)|$ equals the number of affine $2$-planes in $S$ with
direction~$U$, and
$$
\sum_U |Q_U(S)| = N_2(S).
$$
By the pigeonhole principle, some $U$ satisfies
$$
|Q_U(S)|\ge \frac{N_2(S)}{G_m}.
$$
The second inequality follows from Remark~\ref{rem:count-2planes}.
\end{proof}

\begin{proposition}\label{prop:lifting}
Let $U\le \F_2^m$ be a $2$-dimensional subspace, and identify
$\F_2^m/U\cong\F_2^{m-2}$.
If $Q_U(S)$ contains an affine subspace of dimension $k$ in $\F_2^{m-2}$,
then $S$ contains an affine subspace of dimension $k+2$ in $\F_2^m$.
\end{proposition}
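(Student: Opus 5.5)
We pull the affine subspace of $\F_2^{m-2}$ back through the quotient map and check that its preimage is an affine subspace of $\F_2^m$ of dimension $k+2$ lying inside $S$. Let $\pi:\F_2^m\to\F_2^m/U$ be the canonical projection, which is a surjective linear map with kernel $U$. We fix the identification $\F_2^m/U\cong\F_2^{m-2}$ as a linear isomorphism. Then an affine $k$-subspace of $\F_2^{m-2}$ corresponds to a set of the form $\bar b+W'$, where $W'\le\F_2^m/U$ is a linear subspace of dimension $k$ and $\bar b=\pi(b)$ for some $b\in\F_2^m$. The hypothesis says $\bar b+W'\subseteq Q_U(S)$.

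\textbf{Step 1.} Set $W=\pi^{-1}(W')$. Since $\pi$ is linear, $W$ is a linear subspace of $\F_2^m$. It contains $\ker\pi=U$, and the restriction $\pi|_W:W\to W'$ is surjective with kernel $U$. By rank--nullity,
\[
\dim W=\dim W'+\dim U=k+2 .
\]

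\textbf{Step 2.} Consider the affine subspace $b+W$, which has dimension $k+2$. We claim $b+W=\pi^{-1}(\bar b+W')$. If $y\in b+W$, then $\pi(y)\in\bar b+W'$. Conversely, if $\pi(y)\in\bar b+W'$, then $\pi(y-b)\in W'$, so $y-b\in W$.

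\textbf{Step 3.} We show $b+W\subseteq S$. Take $y\in b+W$. By Step 2, $\bar y=\pi(y)$ lies in $\bar b+W'\subseteq Q_U(S)$. By the definition of $Q_U(S)$, this means the whole coset $y+U$ is contained in $S$. In particular $y\in S$. Hence $b+W$ is an affine $(k+2)$-subspace of $\F_2^m$ contained in $S$.

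The argument is short, and nothing in it is a genuine obstacle. The only point needing care is that the identification $\F_2^m/U\cong\F_2^{m-2}$ must be linear, so that affine subspaces of $\F_2^{m-2}$ correspond exactly to cosets of subspaces of the quotient. Choosing a complement of $U$, or any linear isomorphism, guarantees this. The dimension count in Step 1 must also be justified; surjectivity of $\pi|_W$ together with $\ker(\pi|_W)=U$ does this. The case $k=0$ serves as a sanity check: the statement reduces to a single coset $a+U\subseteq S$, which is exactly the definition of $Q_U(S)$.
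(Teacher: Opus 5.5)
Your proof is correct and follows the paper's route: take $W$ to be the preimage of the direction space under the quotient map, so $\dim W=k+2$. Then $b+W$ is the union of the cosets $y+U$ with $\bar y\in\bar b+W'\subseteq Q_U(S)$, and each of these cosets lies in $S$ by definition of $Q_U(S)$; your version simply spells out the rank--nullity count and the identity $b+W=\pi^{-1}(\bar b+W')$, which the paper leaves implicit.
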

 
\begin{proof}
Let $\bar a+\bar W\subseteq Q_U(S)$ be a $k$-dimensional affine subspace
of $\F_2^m/U$, where $\bar W\le \F_2^m/U$ has dimension~$k$.
Let $W\le\F_2^m$ be the preimage of $\bar W$ under the quotient map,
so $\dim W=k+2$.
 
For each $\bar w\in \bar W$, the coset $(a+w)+U$ is entirely contained in~$S$
by definition of $Q_U(S)$.
The union of these cosets is the affine subspace $a+W$, which has dimension
$k+2$ and is contained in~$S$.
\end{proof}

\subsection{The inductive bound}
 
Combining the Fourier bound with the lifting argument,
we obtain an upper bound on $\mathrm{ex}(m,n)$ for all~$n\ge 1$
by induction.
 
\begin{definition}\label{def:f-recursive}
Define the function
$$
f:\{(n,m)\in\mathbb{Z}_{\ge 1}^2\mid n\le m\}
\longrightarrow \mathbb{Z}_{\ge 1}
$$
recursively as follows:
\begin{itemize}
\item $f(1,m)=2$ for all $m\ge 1$;

\item $f(2,m)=\lfloor\sqrt{3}\cdot 2^{m/2}\rfloor+1$ for all $m\ge 2$;

\item for $n\ge 3$ and $m\ge n$, $f(n,m)$ is the smallest integer $s$ such that
for every integer $t\ge s$ one has
$$
\frac{P(t,m)}{G_m}>f(n-2,m-2)-1,
$$
where $P(t,m)$ is defined in \eqref{eq:plane-count} and
$G_m=\binom{m}{2}_2$.
\end{itemize}
\end{definition}
 
\begin{theorem}\label{thm:n-plane-free-bound}
For all $n\ge 1$ and $m\ge n$,
$$
\mathrm{ex}(m,n)<f(n,m).
$$
Equivalently, every subset $S\subseteq\F_2^m$ with $|S|\ge f(n,m)$
contains an affine subspace of dimension~$n$.
\end{theorem}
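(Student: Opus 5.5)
We will argue by strong induction on $n$, treating $n=1$ and $n=2$ as base cases and then stepping from $n-2$ to $n$.

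\textbf{Base cases.} For $n=1$, any two distinct points $a,b\in\F_2^m$ form the affine line $a+\mathrm{Span}\{b-a\}$. So every set of size at least $2=f(1,m)$ contains a $1$-plane, giving $\mathrm{ex}(m,1)\le 1<f(1,m)$. For $n=2$, Lemma~\ref{lem:fourier-2plane} shows that any $2$-plane-free set has size at most $\sqrt3\cdot 2^{m/2}$. Since sizes are integers, we get $\mathrm{ex}(m,2)\le\lfloor\sqrt3\cdot2^{m/2}\rfloor<f(2,m)$.

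\textbf{Inductive step.} Let $n\ge3$ and $m\ge n$, and assume the statement for $(n-2,m-2)$; note $m-2\ge n-2\ge1$, so $f(n-2,m-2)$ is defined. Take $S\subseteq\F_2^m$ with $|S|=t\ge f(n,m)$. By Definition~\ref{def:f-recursive}, applied to this $t$, we have $P(t,m)/G_m>f(n-2,m-2)-1$. Lemma~\ref{lem:pigeonhole-directions} then gives a $2$-dimensional subspace $U$ with $|Q_U(S)|\ge P(t,m)/G_m>f(n-2,m-2)-1$. Since $|Q_U(S)|$ is an integer, $|Q_U(S)|\ge f(n-2,m-2)$. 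Under the identification $\F_2^m/U\cong\F_2^{m-2}$, the induction hypothesis yields an affine $(n-2)$-subspace inside $Q_U(S)$. Proposition~\ref{prop:lifting} lifts it to an affine $n$-subspace inside $S$. Hence every set of size at least $f(n,m)$ contains an $n$-plane, which is exactly $\mathrm{ex}(m,n)<f(n,m)$.

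\textbf{Main obstacle.} The delicate point is that $f(n,m)$ must be well defined, i.e.\ a smallest $s$ with the required property must exist. The definition demands the inequality for every $t\ge s$, which handles non-monotonicity of $P(t,m)$ in $t$. We still need that the inequality holds for all sufficiently large $t$, and in fact some admissible $t\le 2^m$ is needed for the statement to be non-vacuous. If no such $s\le 2^m$ exists, we will read $f(n,m)$ as exceeding $2^m$, and the theorem then holds trivially since no subset has that many elements. Apart from this, we must also check that applying the definition at the actual size $t=|S|$ is legitimate, which follows because $t\ge f(n,m)$.
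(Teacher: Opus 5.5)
Your proposal is correct and follows the paper's proof essentially step for step: base cases $n=1,2$ (the latter via Lemma~\ref{lem:fourier-2plane}), then the step from $n-2$ to $n$ using Definition~\ref{def:f-recursive}, the pigeonhole over directions, rounding $|Q_U(S)|$ up to an integer, the induction hypothesis in $\F_2^m/U\cong\F_2^{m-2}$, and Proposition~\ref{prop:lifting}. Your well-definedness worry is harmless but unnecessary: $t$ ranges over all integers in the definition and $P(t,m)\to\infty$, so $f(n,m)$ is always a finite integer, and if it exceeds $2^m$ the statement is simply vacuous.
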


\begin{proof}
We prove the statement by induction on $n$, simultaneously for all admissible
ambient dimensions $m\ge n$.

The base cases are $n=1$ and $n=2$. For $n=1$, the claim holds for every
$m\ge 1$, since any two distinct points of $\F_2^m$ form an affine
$1$-subspace. This agrees with the definition $f(1,m)=2$. For $n=2$, the claim
holds for every $m\ge 2$ by Lemma~\ref{lem:fourier-2plane}, together with the
definition
$$
f(2,m)=\lfloor\sqrt{3}\cdot 2^{m/2}\rfloor+1.
$$

Assume now that $n\ge 3$, and suppose that the statement has already been
proved for $n-2$, in every ambient dimension $M\ge n-2$. In particular, it holds
for $M=m-2$: every subset of $\F_2^{m-2}$ of cardinality at least
$f(n-2,m-2)$ contains an affine $(n-2)$-subspace.

Let $S\subseteq\F_2^m$ with $|S|=s\ge f(n,m)$.
By Definition~\ref{def:f-recursive}, we have
$$
\frac{P(s,m)}{G_m}>f(n-2,m-2)-1.
$$
By Lemma~\ref{lem:pigeonhole-directions}, there exists a $2$-dimensional
subspace $U$ such that
$$
|Q_U(S)|\ge \frac{P(s,m)}{G_m}.
$$
Since $|Q_U(S)|$ is an integer, it follows that
$$
|Q_U(S)|\ge f(n-2,m-2).
$$
By the inductive hypothesis, $Q_U(S)\subseteq \F_2^m/U\cong \F_2^{m-2}$
contains an affine $(n-2)$-subspace.
Therefore, by Proposition~\ref{prop:lifting}, the set $S$ contains
an affine $n$-subspace.
\end{proof}

\subsection{Application to the game}
 
\begin{corollary}\label{cor:T-n-2-bound}
For every $n\ge 1$, $T(n,2)$ is at most the smallest integer $m$ such that
$$
f(n,m)\le 2^{m-1}.
$$
\end{corollary}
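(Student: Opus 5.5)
The plan is to argue exactly as in Proposition~\ref{prop:ramsey-game-upper}, replacing the Graham--Leeb--Rothschild input by Theorem~\ref{thm:n-plane-free-bound}. Let $m$ be the smallest integer with $f(n,m)\le 2^{m-1}$, assuming such an $m\ge n$ exists; otherwise the statement is vacuous. We show that no complete play of $(m,n)_2$-Tic-Tac-Toe ends with neither player owning an affine $n$-subspace.

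Consider any complete play. The two sets $A_1$ and $A_2$ claimed by $P_1$ and $P_2$ partition $\F_2^m$, which has $2^m$ points. Since the players alternate, $|A_1|=|A_2|=2^{m-1}$. In particular $\max(|A_1|,|A_2|)\ge 2^{m-1}\ge f(n,m)$. By Theorem~\ref{thm:n-plane-free-bound}, that set contains an affine $n$-subspace, so one of the players has completed a winning configuration. Hence no play ends in a draw.

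It follows that $P_2$ has no drawing strategy: any strategy of $P_2$ produces complete plays, and none of them is a draw. By Remark~\ref{rem:outcome-dichotomy}, $P_1$ therefore has a winning strategy, so $(m,n)_2$ is winning. By Definition~\ref{def:threshold} (or by monotonicity, Proposition~\ref{prop:monotonicity-m}), $T(n,2)\le m$.

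The only point needing care is the cardinality count. It is convenient that $|\F_2^m|$ is even, so the final position splits exactly in half. Even without that, taking the larger of the two colour classes gives size at least $2^{m-1}$, so the argument does not depend on parity. One should also check that the minimal $m$ satisfies $m\ge n$, so that $f(n,m)$ is defined; this is implicit in taking $m$ from the domain of $f$. There is no real obstacle beyond this, since all the combinatorial content is already contained in Theorem~\ref{thm:n-plane-free-bound}.
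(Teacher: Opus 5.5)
Your proposal is correct and is essentially the paper's proof: in any complete play each player holds exactly $2^{m-1}\ge f(n,m)$ points, so Theorem~\ref{thm:n-plane-free-bound} forces a monochromatic affine $n$-plane, a draw is impossible, and the strategy-stealing dichotomy gives $P_1$ the win. Your remarks about the case where no such $m$ exists and about needing $m\ge n$ are harmless extra care that the paper leaves implicit.
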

 
\begin{proof}
If $f(n,m)\le 2^{m-1}$, then every subset of $\F_2^m$ of cardinality
$2^{m-1}=\lfloor 2^m/2\rfloor$ contains an affine $n$-plane.
In any complete play of $(m,n)_2$-Tic-Tac-Toe, both players claim exactly
$2^{m-1}$ points. Hence, at the end of such a play, at least one player has
claimed all points of an affine $n$-plane. In particular, a draw is impossible,
and $P_1$ wins by Proposition~\ref{prop:P2-cannot-win}.
\end{proof}
 
Evaluating the recursive bound numerically gives the following.
 
\begin{theorem}\label{thm:upper-bounds-q2}
The threshold $T(n,2)$ satisfies:
$$
\begin{array}{c|ccccc}
n & 1 & 2 & 3 & 4 & 5 \\ \hline
T(n,2)\le & 2 & 4 & 7 & 12 & 21
\end{array}
$$
In combination with the geometric lower bound $T(n,2)\ge n+2$
from Theorem~\ref{thm:lower-bound-nplus2}, together with the trivial bound
$T(1,2)\ge 2$, we obtain
$$
T(1,2)=2,\qquad T(2,2)=4,\qquad 5\le T(3,2)\le 7.
$$
\end{theorem}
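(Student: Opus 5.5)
The plan is to apply Corollary~\ref{cor:T-n-2-bound} one $n$ at a time. For each $n\in\{1,\dots,5\}$ I will exhibit an $m$ with $f(n,m)\le 2^{m-1}$, which by the corollary gives $T(n,2)\le m$. The recursion in Definition~\ref{def:f-recursive} decreases $n$ by two, so odd $n$ reduce to $f(1,\cdot)=2$ and even $n$ reduce to $f(2,\cdot)$. For each target value of $m$ I will therefore evaluate the chain $f(n,m)\to f(n-2,m-2)\to\cdots$ explicitly.

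The small cases are immediate.

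For $n=1$, we have $f(1,2)=2=2^{1}$, so $T(1,2)\le 2$.

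For $n=2$, Corollary~\ref{cor:T22} already gives $f(2,4)=\lfloor 4\sqrt3\rfloor+1=7\le 8$.

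For $n=3$ and $m=7$, I need the least $s$ such that $P(t,7)/G_7>1$ for all $t\ge s$. Here $G_7=\binom{7}{2}_2=(127\cdot126)/(3\cdot2)=2667$. The condition becomes
\[
\frac{t^4}{128}-3t^2+2t>24\cdot 2667=64008.
\]
The left side is increasing for $t\ge 14$, so it suffices to locate the crossing point numerically; it lies near $t\approx 58$. This is at most $64=2^{6}$, so $T(3,2)\le 7$.

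For $n=4$ and $m=12$, I need $f(2,10)=\lfloor\sqrt3\cdot 32\rfloor+1=56$. Then I need $t$ with $P(t,12)>55\,G_{12}$, where $G_{12}=(4095\cdot4094)/6\approx 2.79\cdot10^6$. The dominant term gives $t^4/4096\approx 24\cdot 55\cdot 2.79\cdot10^6$, so $t\approx 1.4\cdot10^3\le 2048$.

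For $n=5$ and $m=21$, I need $f(3,19)$, which is itself obtained from $f(1,17)=2$. That is, I need $P(t,19)>G_{19}$, with $G_{19}\approx 2^{38}/3$, so $t^4\approx 8\cdot 2^{19}\cdot 2^{38}$ and $t\approx 2^{15}$. Writing $F=f(3,19)\approx 2^{15}$, I then need $P(t,21)>(F-1)G_{21}$, with $G_{21}\approx 2^{42}/3$. This gives $t^4\approx 8\cdot 2^{21}\cdot 2^{15}\cdot 2^{42}=2^{81}$, so $t\approx 2^{20.25}\le 2^{20}$. That last comparison is tight, so the constants must be tracked carefully. In each case the term $-3t^2+2t$ is negligible against $t^4/2^m$ at these sizes, so the leading-order estimate is accurate up to a factor very close to $1$. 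I will nonetheless check the final inequality with the exact values of $G_m$.

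One subtlety is the definition of $f$ itself: it asks for all $t\ge s$, not just $t=s$. I will handle this by noting that $t\mapsto t^4/2^m-3t^2+2t$ is increasing once $t^2>3\cdot 2^{m}/2$. Every candidate $s$ above lies in this range, so checking at $t=s$ suffices.

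Finally, the exact values follow by combining these upper bounds with Theorem~\ref{thm:lower-bound-nplus2} and the remark giving $T(1,2)\ge 2$. I expect the main obstacle to be the $n=5$ entry. Its bound depends on the two-level recursion landing just under $2^{20}$, so it requires honest numerical evaluation rather than asymptotic estimates.
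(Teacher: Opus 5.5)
Your plan is the paper's own argument: the paper simply evaluates the recursion of Definition~\ref{def:f-recursive} numerically and applies Corollary~\ref{cor:T-n-2-bound}. Your cases $n\le 3$ are fine (in fact $f(3,7)=56\le 64$).

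\textbf{The $n=5$ step fails as written.} Your final line is $t\approx 2^{20.25}\le 2^{20}$, which is false. Taken at face value it would say $f(5,21)>2^{20}$. The culprit is not the neglected terms $-3t^2+2t$ but the constant in $G_m$. In fact $G_m=(2^m-1)(2^m-2)/6\approx 2^{2m}/6$, not $2^{2m}/3$; equivalently $24G_m=4(2^m-1)(2^m-2)<2^{2m+2}$. You had this right for $G_7$ and $G_{12}$. The extra factor $2$ enters at both levels of the recursion. With the correct constant, $f(3,19)\approx 2^{59/4}\approx 2.76\cdot 10^4$, and the crossing point for $f(5,21)$ is about $2^{19.94}\approx 1.004\cdot 10^6<2^{20}$. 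So the entry $21$ does hold, but only with a margin of roughly $4\%$.

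Your $n=4$ estimate is also wrong: the crossing point is about $1.97\cdot 10^3$, not $1.4\cdot 10^3$. It is still below $2^{11}$, but again only by a few percent. So ``leading order is accurate up to a factor close to $1$'' is not enough here; the constants are the whole proof.

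\textbf{A cleaner certificate.} You can avoid locating crossing points by testing only $t=2^{m-1}$, as in the proof of Corollary~\ref{cor:Tn2-exponential}. There one has $P(2^{m-1},m)/G_m>2^{m-6}(1-12\cdot 2^{-m})$, and $P(\cdot,m)$ is increasing for $t\ge 2^{m-1}$. Hence $f(n,m)\le 2^{m-1}$ as soon as $2^{m-6}(1-12\cdot 2^{-m})>f(n-2,m-2)-1$.
\begin{itemize}
\item For $(3,7)$ this reads $1.8125>1$.
\item For $(4,12)$ it reads $63.8125>55=f(2,10)-1$.
\item For $(5,21)$ it reads $2^{15}-\tfrac{3}{16}>f(3,19)-1$. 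This follows from $f(3,19)\le 2^{15}$: indeed $24P(2^{15},19)=2^{41}-3\cdot 2^{30}+2^{16}>2^{40}-3\cdot 2^{21}+8=24G_{19}$, and $P(\cdot,19)$ is increasing there.
\end{itemize}
Note how tight the last step is with the crude bound $f(3,19)\le 2^{15}$. That is exactly why the spurious extra factor of $2$ in $G_{21}$ flipped your conclusion.
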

 
\subsection{Asymptotic growth}
 
The recursive structure of $f(n,m)$ yields the following uniform bound.
 
\begin{proposition}\label{prop:exponent}
Define the sequence $\alpha(n)$ by $\alpha(1)=0$, $\alpha(2)=\tfrac12$, and
$$
\alpha(n)=\frac{3+\alpha(n-2)}{4}\qquad\text{for }n\ge 3.
$$
Then
$$
\alpha(n)=1-2^{-(n-1)}.
$$
Moreover, for all $n\ge 1$ and $m\ge n$,
$$
f(n,m)\le 8\cdot 2^{\alpha(n)m}.
$$
Consequently,
$$
\mathrm{ex}(m,n)<8\cdot 2^{\alpha(n)m}.
$$
\end{proposition}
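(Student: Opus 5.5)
The plan has two parts: the closed form for $\alpha(n)$, then the bound on $f(n,m)$ by strong induction on $n$ with step $2$, following the recursion in Definition~\ref{def:f-recursive}.

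\textbf{Closed form.} Set $\beta(n)=1-\alpha(n)$. The recursion becomes $\beta(n)=\beta(n-2)/4$, with $\beta(1)=1=2^{0}$ and $\beta(2)=\tfrac12=2^{-1}$. A two-step induction then gives $\beta(n)=2^{-(n-1)}$, since $2^{-(n-3)}/4=2^{-(n-1)}$. Two consequences will be used below: $\alpha(n)\ge 0$ for all $n$, and $\alpha(n)\ge \tfrac34$ for $n\ge 3$.

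\textbf{Base cases.} For $n=1$ we have $f(1,m)=2\le 8$. For $n=2$ we have $f(2,m)\le \sqrt3\cdot 2^{m/2}+1\le 8\cdot 2^{m/2}$.

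\textbf{Inductive step.} Let $n\ge3$, $m\ge n$, and write $\alpha=\alpha(n)$, $\alpha'=\alpha(n-2)$, so that $4\alpha-1=2+\alpha'$. Let $s_0=8\cdot 2^{\alpha m}$. To get $f(n,m)\le s_0$ it suffices, by minimality in Definition~\ref{def:f-recursive}, to show that every integer $t\ge \lceil s_0\rceil$ satisfies $P(t,m)/G_m> f(n-2,m-2)-1$. To absorb the ceiling, I will prove this for every real $t\ge 7\cdot 2^{\alpha m}$, which covers all integers $t\ge \lceil s_0\rceil$ since $\lceil s_0\rceil\ge s_0$.

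The estimates are as follows.
\begin{itemize}
\item Since $G_m=(2^m-1)(2^m-2)/6\le 4^m/6$, we get $24\,G_m\le 4^{m+1}$.
\item By the induction hypothesis, $f(n-2,m-2)-1< 8\cdot 2^{\alpha'(m-2)}\le 8\cdot 2^{\alpha' m}$.
\end{itemize}
Multiplying the target inequality by $24\,G_m$, it therefore suffices to show
\[
\frac{t^4}{2^m}-3t^2+2t\ \ge\ 32\cdot 2^{(2+\alpha')m}.
\]
The map $t\mapsto t^4/2^m-3t^2$ is increasing once $t^2\ge \tfrac32\, 2^m$. This holds in our range because $\alpha\ge\tfrac34$ gives $t^2\ge 49\cdot 2^{3m/2}$. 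So it is enough to check the inequality at $t=c\,2^{\alpha m}$ with $c=7$. At that point:
\begin{itemize}
\item the main term is $c^4\, 2^{(4\alpha-1)m}=2401\cdot 2^{(2+\alpha')m}$;
\item the subtracted term is $3c^2\,2^{2\alpha m}=147\cdot 2^{2\alpha m}\le 147\cdot 2^{(2+\alpha')m}$.
\end{itemize}
The last inequality uses $2\alpha=(3+\alpha')/2\le 2+\alpha'$, which is equivalent to $\alpha'\ge -1$ and so holds because $\alpha'\ge 0$. Since $2401-147>32$, the inequality follows. This gives $f(n,m)\le 8\cdot 2^{\alpha(n)m}$, and Theorem~\ref{thm:n-plane-free-bound} then yields the bound on $\mathrm{ex}(m,n)$.

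\textbf{Main obstacle.} The real difficulty is bookkeeping rather than ideas. Two points need care: the definition of $f(n,m)$ requires the inequality for all $t\ge s$, which is why monotonicity of $t\mapsto t^4/2^m-3t^2$ is needed; and rounding, which the slack constant $7$ versus $8$ absorbs. The key exponent identity is $4\alpha(n)-1=2+\alpha(n-2)$, which is exactly what makes the fourth-power Fourier count match the $4^m$ growth of $G_m$ together with the inductive bound.
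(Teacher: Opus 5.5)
Your argument is correct and is essentially the paper's proof. Both use the same two-step induction driven by $4\alpha(n)-3=\alpha(n-2)$, the bound $G_m\le 4^m/6$, and the inductive hypothesis at $m-2$. The paper takes $s_0=4\cdot 2^{\alpha(n)m}$, absorbs $3t^2$ pointwise via $t^4/2^m\ge 6t^2$ (so it needs no monotonicity), and finishes with $f(n,m)\le\lceil s_0\rceil\le 2s_0$.

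Your rounding step is stated backwards. Proving the inequality for all integers $t\ge\lceil s_0\rceil$ only gives $f(n,m)\le\lceil s_0\rceil$, which can exceed $s_0$. What you need is the inequality for all integers $t\ge\lfloor s_0\rfloor$. Your real-variable estimate already covers these, because $\lfloor s_0\rfloor> 8\cdot 2^{\alpha m}-1\ge 7\cdot 2^{\alpha m}$ (using $2^{\alpha m}\ge 1$). So only the justification sentence needs rewording; the argument itself stands.
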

 
\begin{proof}
The closed form $\alpha(n)=1-2^{-(n-1)}$ is verified directly from the
recurrence.

We prove the bound by induction on $n$, simultaneously for all admissible
ambient dimensions $m\ge n$.

The base cases are $n=1$ and $n=2$. If $n=1$, then for every $m\ge 1$ we have
$$
f(1,m)=2\le 8=8\cdot 2^{\alpha(1)m}.
$$
If $n=2$, then for every $m\ge 2$, by Definition~\ref{def:f-recursive},
$$
f(2,m)=\lfloor\sqrt{3}\cdot 2^{m/2}\rfloor+1
\le 8\cdot 2^{m/2}
=8\cdot 2^{\alpha(2)m}.
$$

Assume now that $n\ge 3$, and suppose that the bound has already been proved
for $n-2$, in every admissible ambient dimension $M\ge n-2$. That is,
$$
f(n-2,M)\le 8\cdot 2^{\alpha(n-2)M}
$$
for every $M\ge n-2$.
In particular, applying this with $M=m-2$, we get
$$
f(n-2,m-2)\le 8\cdot 2^{\alpha(n-2)(m-2)}.
$$

Set
$$
s_0:=4\cdot 2^{\alpha(n)m}.
$$
We claim that every integer $t\ge s_0$ satisfies
$$
\frac{P(t,m)}{G_m}>f(n-2,m-2)-1.
$$
This will imply, by Definition~\ref{def:f-recursive}, that
$f(n,m)\le \lceil s_0\rceil$.

Let $t\ge s_0$. Since $\alpha(n)\ge \tfrac12$ for every $n\ge 2$, we have
$$
t\ge 4\cdot 2^{m/2}\ge \sqrt{6}\cdot 2^{m/2}.
$$
Hence
$$
\frac{t^4}{2^m}\ge 6t^2,
$$
and therefore
$$
P(t,m)=\frac{1}{24}\Bigl(\frac{t^4}{2^m}-3t^2+2t\Bigr)
\ge \frac{1}{24}\Bigl(\frac{t^4}{2^m}-3t^2\Bigr)
\ge \frac{t^4}{48\cdot 2^m}.
$$

On the other hand,
$$
G_m=\binom{m}{2}_2
=\frac{(2^m-1)(2^m-2)}{6}
\le \frac{2^{2m}}{6}.
$$
Combining the two bounds gives
$$
\frac{P(t,m)}{G_m}
\ge
\frac{t^4}{8\cdot 2^{3m}}
\ge
\frac{s_0^4}{8\cdot 2^{3m}}
=
32\cdot 2^{(4\alpha(n)-3)m}.
$$
By the defining recurrence for $\alpha(n)$,
$$
4\alpha(n)-3=\alpha(n-2),
$$
so
$$
\frac{P(t,m)}{G_m}\ge 32\cdot 2^{\alpha(n-2)m}.
$$

By the inductive hypothesis,
$$
f(n-2,m-2)\le 8\cdot 2^{\alpha(n-2)(m-2)}
\le 8\cdot 2^{\alpha(n-2)m}
<32\cdot 2^{\alpha(n-2)m}.
$$
Hence indeed
$$
\frac{P(t,m)}{G_m}>f(n-2,m-2)-1
$$
for every integer $t\ge s_0$.

Therefore $f(n,m)\le \lceil s_0\rceil$. Since $s_0\ge 4$, we have
$\lceil s_0\rceil\le 2s_0$, and thus
$$
f(n,m)\le 8\cdot 2^{\alpha(n)m}.
$$

Finally, Theorem~\ref{thm:n-plane-free-bound} gives
$$
\mathrm{ex}(m,n)<f(n,m)\le 8\cdot 2^{\alpha(n)m}.
$$
\end{proof}
 
\begin{corollary}\label{cor:Tn2-exponential}
For every $n\ge 5$,
$$
T(n,2)\le 7\cdot 2^{n-3}+3.
$$
In particular,
$$
T(n,2)<2^n
\qquad\text{for every }n\ge 5.
$$
Combining this with Theorem~\ref{thm:upper-bounds-q2}, we obtain
$$
T(n,2)<2^n
\qquad\text{for every }n\ge 3.
$$
Moreover, for every $n\ge 5$,
$$
\left\lceil
n+\frac{2^n-n-1}{n+1}
\right\rceil
\le T(n,2)\le 7\cdot 2^{n-3}+3.
$$
\end{corollary}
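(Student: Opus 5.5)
The proof will not use Proposition~\ref{prop:exponent} directly. Plugging its bound $f(n,m)\le 8\cdot 2^{\alpha(n)m}$ into Corollary~\ref{cor:T-n-2-bound} only gives $8\cdot 2^{(1-2^{-(n-1)})m}\le 2^{m-1}$, i.e.\ $m\ge 2^{n+1}$. This is weaker than $7\cdot 2^{n-3}+3$ for all $n$. Instead I will rerun the recursion of Definition~\ref{def:f-recursive} with targets of the form $2^{m-c}$, and anchor it at the small cases behind Theorem~\ref{thm:upper-bounds-q2} rather than at $n\in\{1,2\}$.

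\textbf{Key step: a ``deficit'' lemma.} Suppose $f(n-2,m-2)\le 2^{(m-2)-c'}$. Evaluate the defining inequality of $f(n,m)$ at $t=2^{m-c}$ and at all larger $t$. Use $P(t,m)\ge t^4/(48\cdot 2^m)$, valid once $t\ge\sqrt6\,2^{m/2}$ as in the proof of Proposition~\ref{prop:exponent}, and $G_m\le 2^{2m}/6$. This gives
\[
\frac{P(t,m)}{G_m}\ \ge\ 2^{m-4c-3}.
\]
So $f(n,m)\le 2^{m-c}$ holds as soon as $f(n-2,m-2)\le 2^{m-4c-3}$, i.e.\ as soon as $c'\ge 4c+1$, up to an additive constant I will track carefully. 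The required deficit therefore grows roughly like $4^j$ after $j$ halving steps in $n$. Combined with the $2$-step drop in $m$ at each stage, this yields a threshold on $m$ of the shape $A\cdot 2^{n}+B$. Here $A,B$ are determined by where the recursion is stopped.

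\textbf{Anchoring and assembly.} Starting from the requirement $f(n,m)\le 2^{m-1}$ (deficit $c=1$), I descend in steps of $2$ to $n_0\in\{3,4\}$, matching the parity of $n$. At that level I use explicitly computed values of $f(3,\cdot)$ and $f(4,\cdot)$, the same numerics that produced the table $T(3,2)\le 7$, $T(4,2)\le 12$. These determine the smallest ambient dimension at which the required deficit is met, and I expect this to produce exactly the affine function $7\cdot 2^{n-3}+3$ for all $n\ge5$. The case $n=5$ should be checked against the table value $21=7\cdot4+-7$? No: $7\cdot 2^{2}+3=31\ge 21$, so $n=5$ is consistent with the table, and the induction then only needs to start from the base level.

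\textbf{Remaining consequences.} These are immediate.
\begin{itemize}
\item $7\cdot 2^{n-3}+3<2^n$ is equivalent to $3<2^{n-3}$, which holds for $n\ge5$.
\item For $n=3,4$ the table gives $7<8$ and $12<16$.
\item The lower bound is Corollary~\ref{cor:ES-final} with $q=2$. There $\log_2(q/(q-1))=1$ and $\log_2 q=1$, so it gives $T(n,2)\ge n+\lceil (2^n-1-n)/(n+1)\rceil=\lceil n+(2^n-n-1)/(n+1)\rceil$.
\end{itemize}

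\textbf{Main obstacle.} The hardest step is the constant bookkeeping in the deficit lemma. I must absorb the $-3t^2+2t$ term, the factor $2^{2m}/6$ versus the exact $G_m$, and the ``$-1$'' and integrality in the definition of $f$, all without losing more than the slack that the claimed $7\cdot 2^{n-3}+3$ allows. If the crude inequalities above are too lossy, I will first try replacing $48$ by the sharper $24(1-3\cdot2^m/t^2)^{-1}$, which tends to $24$ for large $t$. I will also use the exact value $G_m=(2^m-1)(2^m-2)/6$, which gains a factor close to $2$ at every step of the induction.
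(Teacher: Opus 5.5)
Your deficit lemma is sound, and it is exact: for real $t\ge 2^{m-c}\ge\sqrt6\,2^{m/2}$ you get $P(t,m)/G_m\ge 2^{m-4c-3}=2^{(m-2)-(4c+1)}$. Hence $f(n-2,m-2)\le\lceil 2^{(m-2)-(4c+1)}\rceil$ implies $f(n,m)\le\lceil 2^{m-c}\rceil$. The ceilings are absorbed by the ``$-1$'' in Definition~\ref{def:f-recursive}, so there is no additive constant left to track.

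The gap is in closing the induction.
\begin{enumerate}[label=(\roman*)]
\item The anchor at $n_0\in\{3,4\}$ cannot come from ``the same numerics that produced the table''. The table only records the smallest $m$ with $f(n_0,m)\le 2^{m-1}$, i.e.\ deficit $1$ at a single dimension. After $j$ steps you need $f(n_0,M)\le 2^{M-c_j}$ with $c_j$ of order $4^j$ (exactly $(4^{j+1}-1)/3$ with your constants) and $M$ growing with $n$. These are infinitely many inputs, and no finite computation supplies them.
\item The decisive verification, that the accumulated deficit is met at $m=7\cdot2^{n-3}+3$, is replaced by ``I expect''. That expectation is not what the recursion produces. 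Nor is the output of the shape $A\cdot2^n+B$: the $2$-step drops in $m$ contribute a term linear in $n$.
\end{enumerate}

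The fix is to descend all the way to $n_0\in\{1,2\}$, where $f$ is explicit. Put $M_j=m-2j$, $c_0=1$, $c_{j+1}=4c_j+1$. The base conditions are $M\ge c+1$ for $f(1,M)=2\le 2^{M-c}$, and $M\ge 2c+2$ for $f(2,M)\le 2^{M-c}$. Choosing $M_J$ at these values, the side condition $M_j\ge 2c_j+3$ holds automatically at every level where the lemma is applied, and it guarantees $2^{M_j-c_j}\ge\sqrt6\,2^{M_j/2}$. This yields $f(n,m)\le 2^{m-1}$ for $m=n+(2^{n+1}-1)/3$ when $n$ is odd, and $m=n+2(2^n-1)/3$ when $n$ is even. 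Both are at most $7\cdot2^{n-3}+3$ for $n\ge5$. So your route, once completed, proves a stronger bound; with the sharp constants it gives roughly $2^{n-1}+n$, consistent with the table values $7,12,21$.

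The paper takes a shorter path that your opening paragraph overlooks. It evaluates the recursion exactly only at the top level, getting $P(2^{m-1},m)/G_m>2^{m-6}(1-12\cdot2^{-m})$. It then applies Proposition~\ref{prop:exponent} at level $n-2$ rather than at level $n$. The requirement $8\cdot2^{(1-1/r)(m-2)}\lesssim 2^{(m-2)-4}$ with $r=2^{n-3}$ is what forces $m\approx 7r$, and the paper takes $m=7r+3$.

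Your handling of the remaining consequences is correct: the inequality $3<2^{n-3}$, the table values for $n=3,4$, and the Erd\H{o}s--Selfridge specialization at $q=2$.
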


\begin{proof}
Let $n\ge 5$ and set
$$
r=2^{n-3},
\qquad
m=7r+3.
$$
Thus $r\ge 4$. We prove that
$$
f(n,m)\le 2^{m-1}.
$$
By Corollary~\ref{cor:T-n-2-bound}, this will imply
$$
T(n,2)\le m=7\cdot 2^{n-3}+3.
$$

By Definition~\ref{def:f-recursive}, it is enough to prove that, for every
integer $t\ge 2^{m-1}$,
$$
\frac{P(t,m)}{G_m}>f(n-2,m-2)-1.
$$
The function
$$
P(t,m)=\frac{1}{24}\left(\frac{t^4}{2^m}-3t^2+2t\right)
$$
is increasing for $t\ge 2^{m-1}$. Indeed, the derivative of $24P(t,m)$ with
respect to $t$ is
$$
\frac{4t^3}{2^m}-6t+2,
$$
which is positive for $t\ge 2^{m-1}$ and $m\ge 5$. Therefore it suffices to
consider $t=2^{m-1}$.

Using
$$
G_m=\binom{m}{2}_2=\frac{(2^m-1)(2^m-2)}{6},
$$
we get
$$
\frac{P(2^{m-1},m)}{G_m}
=
2^{m-6}
\frac{
1-12\cdot 2^{-m}+16\cdot 2^{-2m}
}{
(1-2^{-m})(1-2^{1-m})
}.
$$
Since
$$
(1-2^{-m})(1-2^{1-m})<1,
$$
it follows that
$$
\frac{P(2^{m-1},m)}{G_m}
>
2^{m-6}\left(1-12\cdot 2^{-m}\right).
$$

On the other hand, by Proposition~\ref{prop:exponent},
$$
f(n-2,m-2)
\le
8\cdot 2^{\alpha(n-2)(m-2)}.
$$
Since
$$
\alpha(n-2)=1-2^{-(n-3)}=1-\frac{1}{r},
$$
we have
$$
8\cdot 2^{\alpha(n-2)(m-2)}
=
8\cdot 2^{(1-\frac1r)(m-2)}.
$$
Using $m=7r+3$, we obtain
$$
8\cdot 2^{(1-\frac1r)(m-2)}
=
2^{m-6-\frac1r}.
$$
Thus it is enough to show that
$$
2^{m-6}\left(1-12\cdot 2^{-m}\right)
>
2^{m-6-\frac1r}.
$$
Equivalently, we need
$$
1-12\cdot 2^{-m}>2^{-1/r}.
$$

This follows from $m=7r+3$ and $r\ge 4$. Indeed,
$$
12\cdot 2^{-m}=12\cdot 2^{-7r-3},
$$
whereas
$$
1-2^{-1/r}>\frac{1}{4r}.
$$
Moreover,
$$
12\cdot 2^{-7r-3}<\frac{1}{4r}
$$
for every $r\ge 4$. Hence
$$
12\cdot 2^{-m}<1-2^{-1/r},
$$
and therefore
$$
1-12\cdot 2^{-m}>2^{-1/r}.
$$

Consequently,
$$
\frac{P(2^{m-1},m)}{G_m}
>
2^{m-6-\frac1r}
=
8\cdot 2^{\alpha(n-2)(m-2)}
\ge f(n-2,m-2).
$$
In particular,
$$
\frac{P(2^{m-1},m)}{G_m}>f(n-2,m-2)-1.
$$
By the monotonicity of $P(t,m)$, the same inequality holds for every integer
$t\ge 2^{m-1}$. Hence, by Definition~\ref{def:f-recursive},
$$
f(n,m)\le 2^{m-1}.
$$
Thus Corollary~\ref{cor:T-n-2-bound} gives
$$
T(n,2)\le m=7\cdot 2^{n-3}+3.
$$

Finally,
$$
7\cdot 2^{n-3}+3<2^n
$$
for every $n\ge 5$. Together with the numerical bounds
$$
T(3,2)\le 7<8
\qquad\text{and}\qquad
T(4,2)\le 12<16
$$
from Theorem~\ref{thm:upper-bounds-q2}, this gives
$$
T(n,2)<2^n
\qquad\text{for every }n\ge 3.
$$
The lower bound is the specialization to $q=2$ of the bound obtained from the
Erd\H{o}s-Selfridge criterion.
\end{proof}

\begin{remark}
The estimate above is obtained by applying the recursive bound directly at
density $1/2$, rather than first passing through the uniform estimate
$$
f(n,m)\le 8\cdot 2^{\alpha(n)m}.
$$
This improves the constant in the exponential upper bound from $2$ to $7/8$:
for every $n\ge 5$,
$$
T(n,2)\le 7\cdot 2^{n-3}+3
=
\frac78\,2^n+3.
$$
In particular, the present direct Fourier-analytic argument gives
$$
T(n,2)<2^n
\qquad\text{for every }n\ge 3.
$$

The same density statement also gives the corresponding affine Ramsey bound.
Indeed, if every subset of $\F_2^m$ of cardinality at least $2^{m-1}$ contains
an affine $n$-subspace, then in every $2$-coloring of $\F_2^m$ one of the two
color classes has cardinality at least $2^{m-1}$, and hence contains a
monochromatic affine $n$-subspace. Therefore, for every $n\ge 5$,
$$
\AR_2(n;2)\le 7\cdot 2^{n-3}+3.
$$
This improves the bound $\AR_2(n;2)\le 2^n$ of Frederickson and Yepremyan~\cite{FredericksonYepremyan2025} in this range.
\end{remark}
 
\begin{remark}
The affine/vector-space Ramsey theorem discussed in
Section~\ref{sec:ramsey-upper-bound} gives finiteness of $T(n,q)$ for all
$n$ and $q$. Moreover, quantitative bounds for vector-space Ramsey numbers
are substantially sharper than those obtained by passing through the general
parameter-set theorem; see \cite{FredericksonYepremyan2025}.

The point of the present section is different: for $q=2$ we give a direct,
self-contained and game-specific argument, yielding the explicit estimate
\[
T(n,2)\le 2^{n+1}.
\]
Together with the Erd\H{o}s-Selfridge lower bound, this places $T(n,2)$
between a quantity comparable to \(2^n/(n+1)\) and \(2\cdot 2^n\). Thus, up
to a polynomial factor in the lower bound, the threshold grows exponentially
with base \(2\).
\end{remark}

\section{Small cases}\label{sec:small-cases}
 
In this section we discuss some explicit small instances of the game
and derive concrete information on the threshold function $T(n,q)$.
 
\subsection{A blocking-set criterion}
 
We begin with a simple observation that connects
the minimum size of a blocking set to the outcome of the game.

\begin{definition}
Let $V = \F_q^m$. A subset $B \subseteq V$ is called an affine $n$-blocking set if
$$
B \cap S \neq \emptyset
$$
for every affine subspace $S \subseteq V$ of dimension $n$.
\end{definition}

\begin{remark}\label{rem:blocking-one-way}
If, in the corresponding Maker-Breaker game on the same board and with the same
winning sets, the second player can play as Breaker and guarantee that his final
set of claimed points is an affine $n$-blocking set, then $(m,n)_q$-Tic-Tac-Toe
is drawing.

Indeed, such a strategy prevents $P_1$ from occupying all points of any affine
$n$-subspace. Hence $P_1$ cannot win against this strategy. Since, by
Proposition~\ref{prop:P2-cannot-win}, $P_2$ cannot have a winning strategy in
the strong game, the outcome is a draw.
\end{remark}
 
\begin{proposition}\label{prop:blocking-size-criterion}
Let $\beta(m,n,q)$ denote the minimum cardinality of an affine $n$-blocking set in $\F_q^m$.
If
$$
\beta(m,n,q) > \left\lfloor \frac{q^m}{2} \right\rfloor,
$$
then $(m,n)_q$-Tic-Tac-Toe is winning for $P_1$.
\end{proposition}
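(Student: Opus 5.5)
We will argue exactly as in Proposition~\ref{prop:ramsey-game-upper}, with the blocking-set condition replacing the Ramsey input. The strategy is to show that no complete play can end in a draw. Proposition~\ref{prop:P2-cannot-win} and Remark~\ref{rem:outcome-dichotomy} then force the game to be winning for $P_1$.

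\textbf{Step 1: the final sets.} Consider any complete play of $(m,n)_q$-Tic-Tac-Toe in which no player has completed an affine $n$-subspace before the board is full. At the end, every point of $V=\F_q^m$ has been claimed. Since the players alternate and $P_1$ moves first, $P_1$ owns $\lceil q^m/2\rceil$ points and $P_2$ owns $A_2$ with $|A_2|=\lfloor q^m/2\rfloor$.

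\textbf{Step 2: $P_1$ completes a subspace.} By hypothesis, $|A_2|<\beta(m,n,q)$, so $A_2$ is not an affine $n$-blocking set. Hence there is an affine $n$-subspace $S$ with $S\cap A_2=\emptyset$. Every point is claimed, so $S\subseteq A_1$, and $P_1$ has occupied all of $S$.

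\textbf{Step 3: no draw.} Step 2 contradicts the assumption that the play was a draw. Note that any play that ended earlier already had a winner, so in every complete play some player occupies an $n$-plane and draws are impossible. Therefore $P_2$ has no drawing strategy. By Remark~\ref{rem:outcome-dichotomy}, which rests on Proposition~\ref{prop:P2-cannot-win}, $P_1$ has a winning strategy.

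\textbf{Main obstacle.} The only delicate point is the bookkeeping in Step 1: one must check that, when the board is filled without a winner, the second player holds exactly $\lfloor q^m/2\rfloor$ points. This matches the floor in the hypothesis and makes the strict inequality $|A_2|<\beta(m,n,q)$ available in Step 2.
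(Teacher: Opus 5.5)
Your proof is correct and is essentially the paper's argument. At the end of a complete play without a winner, $P_2$ holds only $\lfloor q^m/2\rfloor<\beta(m,n,q)$ points, so his set is not a blocking set and some affine $n$-subspace lies entirely in $P_1$'s points. Draws are therefore impossible, and Proposition~\ref{prop:P2-cannot-win} (via Remark~\ref{rem:outcome-dichotomy}) gives a first-player win.
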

 
\begin{proof}
In any complete play of the game, $P_2$ claims at most $\lfloor q^m/2 \rfloor$ points. 
If the minimum size of an affine $n$-blocking set exceeds this number, then the set of points claimed by $P_2$ cannot be an affine $n$-blocking set. 
Hence there exists an affine subspace of dimension $n$ entirely contained in the set of points claimed by $P_1$, so a draw is combinatorially impossible.
Since $P_2$ cannot have a winning strategy (Proposition~\ref{prop:P2-cannot-win}), $P_1$ has a winning strategy.
\end{proof}

\subsection{The game $(2,1)_q$}
 
The game $(2,1)_q$ is exactly Tic-Tac-Toe played on the affine plane of order
$q$, so known results on Tic-Tac-Toe on finite affine planes apply directly.
In particular, Carroll and Dougherty showed that, in the affine case, the first
player wins for $q\le 4$, whereas for $q>4$ the second player can force a draw
\cite{CarrollDougherty2004}. More recently, Danziger, Huggan, Malik, and
Marbach gave a human-verifiable explicit proof that the affine plane of order
$4$ is a first-player win \cite{DanzigerHugganMalikMarbach2022}.
 
\begin{proposition}\label{prop:21q-winning-q2}
The game $(2,1)_2$ is winning for $P_1$.
\end{proposition}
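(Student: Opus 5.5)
The board of $(2,1)_2$ is $\F_2^2$, which has $4$ points. An affine line in $\F_2^2$ is a coset $a+\mathrm{Span}\{v\}$ with $v\neq 0$, so it consists of $2$ points. Conversely, any two distinct points $x\neq y$ form the line $x+\mathrm{Span}\{y-x\}$. So the winning sets are exactly the $\binom{4}{2}=6$ two-element subsets of the board, and I would give the direct argument from this observation.

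I would exhibit the winning strategy explicitly. $P_1$ claims an arbitrary point $x_1$. $P_2$ must answer with some point $y_1\neq x_1$. Since the board has $4$ points, at least two points remain unclaimed. $P_1$ claims one of them, say $x_2$. Now $P_1$ owns two distinct points, and $\{x_1,x_2\}$ is an affine line. So $P_1$ completes a winning configuration on his second move. At that moment $P_2$ owns only the single point $y_1$, which contains no line, so $P_2$ cannot have won first.

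As a cross-check, I would also point out that the claim follows from Proposition~\ref{prop:blocking-size-criterion}. A set $B$ blocking all lines must meet every $2$-subset of $\F_2^2$, so its complement has at most one point. Hence $\beta(2,1,2)=3>2=\lfloor 4/2\rfloor$, and the criterion gives that $P_1$ wins. The same observation shows $f(1,m)=2\le 2^{m-1}$ at $m=2$ in Corollary~\ref{cor:T-n-2-bound}.

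There is no real obstacle here. The only point needing care is the identification of affine lines in $\F_2^m$ with arbitrary pairs of distinct points, and this is immediate from the definition of an $n$-plane.
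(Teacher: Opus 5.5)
Your proof is correct and is essentially the paper's argument: affine lines in $\F_2^2$ are exactly the pairs of distinct points, so $P_1$ completes a line on his second move, before $P_2$ can own two points. Your blocking-set cross-check ($\beta(2,1,2)=3>2$) is also valid, though not needed.
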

 
\begin{proof}
In $\F_2^2$ every affine line has size $2$, and for every two distinct points
there exists a unique affine line containing them.
Thus, after $P_1$ claims any first point, on his second move he can claim any
other unclaimed point and thereby occupies a full affine line.
\end{proof}
Although the case $q=3$ is already covered by the result of
Carroll and Dougherty, we include a short proof based on affine blocking sets,
in order to illustrate the connection between draws in the game and blocking
sets.
\begin{lemma}[Brouwer-Schrijver~\cite{BrouwerSchrijver1978}]
\label{lem:blocking-AG2}
Let $q$ be a prime power.
Every affine $1$-blocking set in $\F_q^2$ has cardinality at least $2q-1$.
\end{lemma}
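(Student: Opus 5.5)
The plan is the classical Jamison / Brouwer--Schrijver polynomial argument. Since affine $1$-subspaces of $\F_q^2$ are exactly the affine lines, the claim is that any set $B$ meeting every line has $|B|\ge 2q-1$. Let $B$ be such a blocking set. After a translation, which preserves the family of lines, we may assume $0\in B$. Put $B'=B\setminus\{0\}$; it then suffices to show $|B'|\ge 2q-2$.

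The key observation concerns lines avoiding the origin. Each such line has the form $\{(x,y): ax+by=1\}$ with $(a,b)\ne(0,0)$, and there are exactly $q^2-1$ of them, one for each nonzero $(a,b)$. Because $0$ lies on none of them, every one of these lines must be blocked by some point of $B'$. Now define
\[
F(a,b)=\prod_{(x,y)\in B'}\bigl(ax+by-1\bigr)\in \F_q[a,b].
\]
The blocking property says $F$ vanishes at every $(a,b)\ne(0,0)$. On the other hand $F(0,0)=(-1)^{|B'|}\ne 0$. So $F$ agrees, as a function on $\F_q^2$, with a nonzero multiple of the indicator of the origin, namely $c\,(1-a^{q-1})(1-b^{q-1})$ with $c\ne0$.

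Next I reduce $F$ modulo $a^q-a$ and $b^q-b$. This reduction does not raise total degree, and it produces the unique representative of the function in which each variable has degree at most $q-1$. That representative is $c(1-a^{q-1})(1-b^{q-1})$, and its total degree is $2q-2$, coming from the monomial $a^{q-1}b^{q-1}$. It follows that $\deg F\ge 2q-2$. Since $\deg F\le |B'|$, we get $|B'|\ge 2q-2$, and hence $|B|\ge 2q-1$.

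The main step needing care is the degree comparison. It relies on the facts that reduction by $x^q-x$ never increases total degree, and that reduced polynomials are uniquely determined by the functions they define on $\F_q^2$. Both are standard Chevalley--Alon type facts about polynomials over finite fields, which I would quote. The rest is bookkeeping: checking that $F(0,0)\ne0$ and that the correspondence between nonzero $(a,b)$ and lines avoiding the origin is bijective.
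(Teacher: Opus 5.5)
Your proof is correct and complete. The paper gives no proof of its own: it states the lemma only by citation to \cite{BrouwerSchrijver1978}. Your argument is exactly the polynomial-degree proof from that source, specialized to the plane: $\prod_{b\in B'}(\langle a,b\rangle-1)$ vanishes on all nonzero $a$ but not at $0$, so its degree is at least $2(q-1)$. The steps you flag are all in order, namely the reduction modulo $a^q-a,\ b^q-b$, uniqueness of reduced representatives, $F(0,0)=(-1)^{|B'|}\neq 0$, and $\deg F\le |B'|$.
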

 
\begin{proposition}\label{prop:21q-winning-q3}
The game $(2,1)_3$ is winning for $P_1$.
\end{proposition}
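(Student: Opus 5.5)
The plan is to apply Proposition~\ref{prop:blocking-size-criterion} with $m=2$, $n=1$, $q=3$, taking the needed lower bound on blocking sets directly from Lemma~\ref{lem:blocking-AG2}.

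By Lemma~\ref{lem:blocking-AG2}, every affine $1$-blocking set in $\F_3^2$ has at least $2\cdot 3-1=5$ points, so $\beta(2,1,3)\ge 5$. The board has $q^m=9$ points. In a complete play $P_1$ claims $5$ points and $P_2$ claims $\lfloor 9/2\rfloor=4$. Since
\[
\beta(2,1,3)\ge 5>4=\left\lfloor \tfrac{9}{2}\right\rfloor,
\]
the set of points claimed by $P_2$ can never block every line. Hence in every complete play some affine line lies entirely in $P_1$'s set, so a draw is combinatorially impossible. Proposition~\ref{prop:blocking-size-criterion}, which relies on Proposition~\ref{prop:P2-cannot-win}, then shows that $(2,1)_3$ is winning for $P_1$.

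The only non-formal input is the blocking-set bound. If we did not want to quote Brouwer--Schrijver, we would prove directly that no $4$-subset $B\subseteq\F_3^2$ meets all $12$ lines. Each point of $\F_3^2$ lies on exactly $4$ lines, so $4$ points can meet at most $16$ incidences. Blocking all $12$ lines therefore allows at most $4$ "extra" incidences, coming from lines that contain at least two points of $B$. If two points of $B$ are collinear with a third point of $B$, that line absorbs $2$ extra incidences. If no three points of $B$ are collinear, the $\binom42=6$ pairs give $6$ distinct lines, each hit twice, which is already too many. A short case check finishes the argument, and this is the only step needing care. Alternatively, one can use the polynomial (Jamison) argument behind the general $2q-1$ bound, but for $q=3$ the counting check suffices.
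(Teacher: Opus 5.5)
Your proof is correct and is the paper's argument: Lemma~\ref{lem:blocking-AG2} gives $\beta(2,1,3)\ge 5>4=\lfloor 9/2\rfloor$, and Proposition~\ref{prop:blocking-size-criterion} then gives a first-player win. Your optional direct check of the blocking bound is also sound, where the paper simply cites Brouwer--Schrijver: if $a$ and $b$ count the lines meeting a blocking $4$-set in exactly two and three points, double counting gives $a+2b=4$ and $a+3b=6$, so $b=2$, and two distinct lines with three points each would need at least five points.
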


\begin{proof}
By Lemma~\ref{lem:blocking-AG2}, every affine $1$-blocking set in $\F_3^2$
has cardinality at least
$$
2\cdot 3-1=5.
$$
Thus
$$
\beta(2,1,3)\ge 5.
$$
On the other hand,
$$
\left\lfloor \frac{3^2}{2}\right\rfloor=4.
$$
Hence
$$
\beta(2,1,3)>\left\lfloor \frac{3^2}{2}\right\rfloor.
$$
The claim follows from Proposition~\ref{prop:blocking-size-criterion}.
\end{proof}
 
\begin{proposition}[Danziger, Huggan, Malik, Marbach~\cite{DanzigerHugganMalikMarbach2022}]
\label{prop:21q-winning-q4}
The game $(2,1)_4$ is winning for $P_1$.
\end{proposition}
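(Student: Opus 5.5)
The plan is to exhibit an explicit winning strategy for $P_1$ on the affine plane $\F_4^2$, in the spirit of \cite{DanzigerHugganMalikMarbach2022}. The blocking-set route of Proposition~\ref{prop:21q-winning-q3} cannot work here. By Lemma~\ref{lem:blocking-AG2} a $1$-blocking set of $\F_4^2$ only needs $2\cdot 4-1=7$ points, and this bound is attained. Since $\lfloor 16/2\rfloor=8$, $P_2$ has enough points to block every line at the end of the game, so drawn final colourings exist. Proposition~\ref{prop:blocking-size-criterion} therefore gives nothing, and the argument has to be genuinely dynamic.

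\textbf{Setup.} The board has $16$ points and $20$ lines, in $5$ parallel classes of $4$ lines each. Every point lies on $5$ lines, and two lines either meet in exactly one point or are parallel. By Lemma~\ref{lem:fork-necessity}, $P_1$ must eventually create a position, with $P_2$ to move, containing two non-parallel unblocked $1$-close lines with distinct missing points. So the goal is a forcing sequence of moves ending in such a fork. Each forcing move should create an unblocked $1$-close line (a \emph{threat}), so that $P_2$'s reply is determined. Forcing moves also keep the case analysis finite and small.

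\textbf{Steps.} First, I will use the symmetry of the board. The group $\mathrm{A\Gamma L}(2,4)$ acts transitively on ordered triples of non-collinear points and preserves the line structure. So after $P_1$'s first move $a$ and $P_2$'s reply $b$, we may normalize $a=(0,0)$ and $b=(1,0)$. Second, $P_1$ picks a point $c$ off the line $ab$. Up to the stabilizer of $\{a,b\}$, $P_2$'s replies fall into a handful of orbits: on the line $ac$, on the line $bc$, on the line $ab$, or elsewhere. Third, in each orbit, $P_1$ plays a point completing two or three unblocked $2$-of-$4$ lines. He then follows with moves that each create a $1$-close unblocked threat. Each threat forces $P_2$'s reply, and that reply must not itself create a $1$-close line for $P_2$. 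The sequence ends with a point $x$ lying on two non-parallel lines, each already containing two of $P_1$'s points and none of $P_2$'s, where the two remaining points of the lines are distinct. Claiming $x$ then produces two threats that $P_2$ cannot both block.

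\textbf{Main obstacle.} The hardest step is controlling the counter-threats. Every forced reply of $P_2$ adds a point to $P_2$'s set, and $P_1$ must check that $P_2$ never obtains three points on an unblocked line before the fork is complete. Otherwise $P_1$ would have to interrupt the sequence to block. I would first pick forcing moves whose forced replies all lie on lines already containing one of $P_1$'s points; such a reply can never lie on an unblocked line for $P_2$. Only if some branch resists this would I fall back on a direct (computer-verifiable) enumeration of the remaining subtree. Since this is precisely the content of \cite{DanzigerHugganMalikMarbach2022} (and also of \cite{CarrollDougherty2004}), in the paper we would cite their case analysis for the verification of each branch.
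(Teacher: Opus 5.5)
Your proposal ends up where the paper does. The paper gives no argument of its own for $(2,1)_4$ and simply imports the result from \cite{DanzigerHugganMalikMarbach2022} (see also \cite{CarrollDougherty2004}). Your framing around that citation is accurate: $\beta(2,1,4)=7\le 8$ makes Proposition~\ref{prop:blocking-size-criterion} inapplicable, so a fork as in Lemma~\ref{lem:fork-necessity} has to be forced dynamically.

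Be aware, though, that the outline carries no independent proof weight. For instance, your counter-threat heuristic cannot be realized during the forcing phase. A forced reply lies on the threatened line and four other lines, so making every line through it already contain a point of $P_1$ requires $P_1$ to own at least $3+4=7$ points. All of the real content --- the actual moves, and the check that $P_2$ never gets three points on a line free of $P_1$ --- therefore lives in the cited case analysis.
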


\begin{proposition}[Carroll-Dougherty~\cite{CarrollDougherty2004}]
\label{prop:21q-drawing-largeq}
If $q\ge 5$ is a prime power, then the game $(2,1)_q$ is drawing.
\end{proposition}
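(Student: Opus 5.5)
The plan is to go through the Maker--Breaker comparison of Remark~\ref{rem:maker-breaker}\ref{item:breaker-implies-draw}. It suffices to show that Breaker, moving second, wins the Maker--Breaker game on $\F_q^2$ whose winning sets are the affine lines. The natural tool is Theorem~\ref{thm:drawing_2qn-1} with $m=2$, $n=1$. By Proposition~\ref{prop:number-affine-subspaces} there are $q\binom{2}{1}_q=q(q+1)$ lines, each of size $q$. So the game is drawing as soon as
\[
q^2+q<2^{q-1}.
\]
The first step is to verify this for all $q\ge 7$. At $q=7$ it reads $56<64$. For the induction step, passing from $q$ to $q+1$ multiplies the left side by $\frac{(q+1)(q+2)}{q(q+1)}=\frac{q+2}{q}\le 2$, while the right side doubles, so the inequality propagates. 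This settles every prime power $q\ge 7$.

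The remaining prime power is $q=5$, where the inequality fails ($30\not<16$). This is the main obstacle, and plain Erd\H{o}s--Selfridge will not suffice. My first attempt would be to run the Erd\H{o}s--Selfridge potential argument not from the empty board but from a later position. The potential is
\[
\Phi=\sum_{L \text{ unblocked}} 2^{-(5-|L\cap A_1|)}.
\]
The standard argument shows that if $\Phi<1$ at a position where Maker is about to move and Breaker plays greedily, then Maker never completes a line.

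Using the symmetry of $\mathrm{AG}(2,5)$, which is transitive on points, on pairs of points and on non-collinear triples, I would enumerate Maker's first two or three moves up to isomorphism. For each case I would prescribe Breaker's replies (block the line through two Maker points, choose points lying on many Maker-heavy lines), and then compute $\Phi$ exactly. Each position is a small finite computation: a few dozen lines with weights $2^{-5},\dots,2^{-2}$. The aim is to reach $\Phi<1$ in every case, after which greedy Breaker play finishes the argument.

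If the potential refuses to drop below $1$ in some configuration, the fallback is to add local pairing rules for the lines through Maker's early points, combined with the potential on the remaining lines, or simply to cite the case analysis of Carroll and Dougherty~\cite{CarrollDougherty2004}. In either route, Breaker winning the Maker--Breaker game yields a drawing strategy for $P_2$ in the strong game, and Proposition~\ref{prop:P2-cannot-win} then shows that $(2,1)_q$ is drawing for every prime power $q\ge 5$.
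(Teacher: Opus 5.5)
\textbf{Verdict: the argument for $q\ge 7$ is correct, but there is a genuine gap at $q=5$, which is the only hard case.}

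Your $q\ge 7$ argument is correct and goes beyond the paper. The paper gives no proof of this proposition and simply cites Carroll--Dougherty. Theorem~\ref{thm:drawing_2qn-1} with $m=2$, $n=1$ needs $q^2+q<2^{q-1}$, which holds at $q=7$ and propagates because $(q+2)/q\le 2$. This gives a self-contained proof for every prime power $q\ge 7$.

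\textbf{First problem: the potential criterion is off by a factor of $2$.} The Erd\H{o}s--Selfridge invariant is that, under greedy Breaker play, $\Phi$ does not increase between consecutive positions \emph{with Breaker to move}, i.e.\ just after Maker's moves. So you need $\Phi<1$ at a position where Breaker is to move. Equivalently, when Maker is to move you need $\Phi+\max_x\Delta(x)<1$, where $\Delta(x)$ is the increase caused by Maker claiming $x$; generically this means $\Phi<1/2$. A single unblocked line with one missing point has $\Phi=1/2$, and Maker completes it at once. The error is not cosmetic here:
\begin{itemize}
\item Right after Maker's first move, $\Phi=6\cdot 2^{-4}+24\cdot 2^{-5}=9/8$.
\item After Breaker's reply (unique up to symmetry), $\Phi=29/32$. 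Your criterion would therefore ``finish'' the proof after one round, but that conclusion is unjustified.
\end{itemize}

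\textbf{Second problem: a shallow enumeration cannot work.} Measure $\Phi$ in units of $2^{-5}$.
\begin{itemize}
\item Maker's second move raises $\Phi$ to $35$ if it is off the line $x_1y_1$, or to $34$ if it is on it.
\item A greedy Breaker reply brings $\Phi$ back to $27$.
\item Maker can then take $x_3$ on the line through Breaker's two points, avoiding $x_1y_1$, $x_2y_1$ and $x_1x_2$, and return to $34$.
\end{itemize}
So after three Maker moves, $\Phi$ is still above $1$. Whether it ever drops below $1$ is exactly what is left unproved. That question amounts to whether Breaker wins the Maker--Breaker game on $\F_5^2$, which is stronger than the strong-game draw of Carroll--Dougherty and is not implied by it. Your ``pairing plus potential'' fallback is also never specified.

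For $q=5$ you are therefore left with the citation, which is exactly what the paper does. In that case your closing sentence should not route through Maker--Breaker, since the cited result gives the draw of the strong game directly.
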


\subsection{The threshold $T(1,q)$}
 
We now specialize to $n=1$.
 
Since $(2,1)_q$ is winning for $q \in \{2,3,4\}$ and $(1,1)_q$ is trivially drawing for every $q$, we obtain
$$
T(1,2)=T(1,3)=T(1,4)=2.
$$
 
By Proposition~\ref{prop:21q-drawing-largeq}, $(2,1)_q$ is drawing for every
prime power $q\ge 5$, hence
$$
T(1,q)\ge 3 \qquad\text{for every prime power } q\ge 5.
$$
 
We record the known values and bounds:
 
\[
\begin{array}{c|c}
q & T(1,q) \\ \hline
2 & 2 \\
3 & 2 \\
4 & 2 \\
q\ge 5 \text{ prime power} & T(1,q)\ge 3
\end{array}
\]
 
\subsection{The threshold $T(2,q)$}
 
By Theorem~\ref{thm:lower-bound-nplus2}, we have the general lower bound $T(2,q)\ge 4$.
 
The Erd\H{o}s-Selfridge bound (Corollary~\ref{cor:ES-final}) yields
\[
T(2,q)\ge
2+\left\lceil
\frac{q^2-1-2\log_2\!\big(\frac{q}{q-1}\big)}
{3\log_2 q}
\right\rceil.
\]
 
Combining the two lower bounds:
\[
T(2,q)\ge
\max\!\left\{
4,\
2+\left\lceil
\frac{q^2-1-2\log_2\!\big(\frac{q}{q-1}\big)}
{3\log_2 q}
\right\rceil
\right\}.
\]
 
For $q=2$, the threshold is determined exactly by Corollary~\ref{cor:T22}
(which is a special case of the Fourier-analytic bound of Lemma~\ref{lem:fourier-2plane}):
$T(2,2)=4$.
 
We record the resulting bounds:
 
\[
\begin{array}{c|c}
q & \text{bounds for } T(2,q) \\ \hline
2 & T(2,2) = 4 \\
3 & T(2,3)\ge 4 \\
4 & T(2,4)\ge 5 \\
5 & T(2,5)\ge 6 \\
7 & T(2,7)\ge 8
\end{array}
\]
 
For small $q$, the geometric lower bound $T(2,q)\ge 4$ dominates, whereas for larger $q$ the Erd\H{o}s-Selfridge estimate provides a stronger constraint.
 
\subsection{The thresholds $T(n,2)$}
 
For $q=2$, the Fourier-analytic method of Section~\ref{sec:fourier}
provides explicit upper bounds on $T(n,2)$ for all~$n$.
Combining these with the geometric lower bound $T(n,2)\ge n+2$
from Theorem~\ref{thm:lower-bound-nplus2}, together with the trivial bound
$T(1,2)\ge 2$, we obtain:
 
\[
\begin{array}{c|c|c|c}
n & T(n,2)\ge & T(n,2)\le & \text{status} \\ \hline
1 & 2 & 2 & T(1,2) = 2 \\
2 & 4 & 4 & T(2,2) = 4 \\
3 & 5 & 7 & 5\le T(3,2)\le 7 \\
4 & 6 & 12 & 6\le T(4,2)\le 12 \\
5 & 7 & 21 & 7\le T(5,2)\le 21
\end{array}
\]
 
The lower bounds come from the geometric drawing strategies of Section~\ref{sec:geom-draw}, while
the upper bounds follow from the recursive bound of Theorem~\ref{thm:n-plane-free-bound}
combined with Corollary~\ref{cor:T-n-2-bound}.
By Corollary~\ref{cor:Tn2-exponential}, these binary thresholds satisfy the
explicit bound
$$
T(n,2)\le 2^{n+1}.
$$

\section{Concluding remarks and open problems}

\subsection{Weak win and the Maker-Breaker threshold}

As noted in Remark~\ref{rem:maker-breaker}, the game $(m,n)_q$-Tic-Tac-Toe
can be compared to the corresponding Maker-Breaker game, in which only Maker
(player $P_1$) attempts to complete an affine $n$-subspace, while Breaker
(player $P_2$) merely tries to prevent this.

Beck's criterion for Maker-Breaker games~\cite[Theorem~1.2]{Beck2008}
provides a sufficient condition for Maker to win.
Applied to our setting it yields the following.

\begin{proposition}\label{prop:beck-MB}
If
$$
q^m - 1 > q^n(q^n-1)\,2^{q^n-3},
$$
then Maker has a winning strategy in the Maker-Breaker game on $(\F_q^m, \mathcal{W})$,
where $\mathcal{W}$ is the family of all affine $n$-subspaces.
In particular, the Maker-Breaker game is a Maker win whenever
$$
m > 2n + q^n \log_q 2.
$$
\end{proposition}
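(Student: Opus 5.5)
The plan is to apply Beck's Maker-win criterion \cite[Theorem~1.2]{Beck2008} directly to the hypergraph $(\F_q^m,\mathcal W)$. In the form I will use it, it says the following. Let $\mathcal F$ be a $k$-uniform family on a board $X$, and let $\Delta_2(\mathcal F)$ be its maximum pair degree, meaning the largest number of members of $\mathcal F$ containing two given distinct points. If
\[
\frac{|\mathcal F|}{|X|}>2^{k-3}\,\Delta_2(\mathcal F),
\]
then Maker has a winning strategy. Here $k=q^n$ and $|X|=q^m$, and Proposition~\ref{prop:number-affine-subspaces} gives $|\mathcal W|=q^{m-n}\binom{m}{n}_q$. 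It remains to compute $\Delta_2$ and simplify.

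\textbf{Pair degree.} Fix distinct points $x,y\in\F_q^m$. An affine $n$-subspace $a+U$ contains both $x$ and $y$ exactly when it equals $x+U$ with $y-x\in U$. So the $n$-subspaces through $x,y$ correspond bijectively to the $n$-dimensional linear subspaces $U$ containing the nonzero vector $y-x$. Passing to the quotient by $\langle y-x\rangle$, their number is $\binom{m-1}{n-1}_q$, independently of $x$ and $y$. Hence $\Delta_2=\binom{m-1}{n-1}_q$.

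\textbf{Simplifying the criterion.} Using the product formula for Gaussian binomials,
\[
\binom{m}{n}_q\Big/\binom{m-1}{n-1}_q=\frac{q^m-1}{q^n-1}.
\]
Beck's condition $q^{-n}\binom{m}{n}_q>2^{q^n-3}\binom{m-1}{n-1}_q$ then becomes
\[
\frac{q^m-1}{q^n(q^n-1)}>2^{q^n-3},
\]
which is exactly the first claimed inequality.

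\textbf{The ``in particular'' clause.} If $m>2n+q^n\log_q 2$, then $q^m>q^{2n}2^{q^n}$. Therefore
\[
q^m-1>q^{2n}2^{q^n}-1\ge \tfrac18\,q^{2n}2^{q^n}>q^n(q^n-1)2^{q^n-3},
\]
so the first condition holds.

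The only real care point is stating Beck's criterion with the correct normalization, namely $2^{k-3}$ against the pair degree and $|\mathcal F|/|X|$ as the average degree divided by $k$. Once that is fixed, the rest is the routine Gaussian-binomial identity above.
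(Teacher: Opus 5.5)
Your proposal is correct and follows the same route as the paper. Like the paper, you apply Beck's weak-win criterion $|\mathcal W|/|V| > 2^{q^n-3}\Delta_2$ with $|\mathcal W|/|V| = q^{-n}\binom{m}{n}_q$ and $\Delta_2=\binom{m-1}{n-1}_q$, simplify via $\binom{m}{n}_q/\binom{m-1}{n-1}_q=(q^m-1)/(q^n-1)$, and obtain the ``in particular'' clause by comparison with $q^{2n}2^{q^n}$; the only addition is that you actually justify the pair-degree count, which the paper merely asserts.
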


\begin{proof}
The hypergraph $(\F_q^m, \mathcal W)$ is $q^n$-uniform with $|\mathcal W|/|V| = q^{-n}\binom{m}{n}_q$
and pair-degree $\Delta_2(\mathcal W) = \binom{m-1}{n-1}_q$.
Using the identity $\binom{m}{n}_q / \binom{m-1}{n-1}_q = (q^m-1)/(q^n-1)$,
Beck's condition $|\mathcal W|/|V| > 2^{q^n-3}\,\Delta_2(\mathcal W)$ reduces to
$q^m - 1 > q^n(q^n-1)\,2^{q^n-3}$.
The simplified form follows from the bound $q^n(q^n-1)\,2^{q^n-3} < q^{2n}\,2^{q^n}$.
\end{proof}

\begin{remark}
Since a winning strategy for Maker in the Maker-Breaker game does \emph{not}
in general imply a winning strategy for $P_1$ in the strong game
(the strong game is harder for the first player, as $P_1$ must also defend
against $P_2$ completing a winning set),
Proposition~\ref{prop:beck-MB} does not directly yield an upper bound on
$T(n,q)$.
\end{remark}

Nevertheless, the Maker-Breaker threshold is suggestive.
Writing $T^{\mathrm{MB}}(n,q)$ for the smallest $m$ such that Maker wins
the Maker-Breaker game on $(\F_q^m,\mathcal W)$, we have
$$
T^{\mathrm{MB}}(n,q) \le 2n + q^n\log_q 2 + 1.
$$
Thus the Maker-Breaker threshold grows, up to a constant depending on $q$, at
speed $q^n$.

For the strong game, Corollary~\ref{cor:Tn2-exponential} shows that
$$
T(n,2)\le 2^{n+1},
$$
and, together with the Erd\H{o}s-Selfridge lower bound, places $T(n,2)$ between
roughly $2^n/(n+1)$ and $2\cdot 2^n$. Thus, for $q=2$, the strong-game
threshold also moves at exponential speed about $2^n$. This motivates the
following conjecture.

\begin{conjecture}\label{conj:exponential}
For every prime power $q$, there exists a constant $C_q>0$ such that
$$
T(n,q)\le C_q q^n
$$
for every $n\ge 1$.
\end{conjecture}

Establishing this conjecture for $q\ge 3$ would require bounds on
affine-subspace-free subsets of $\F_q^m$ strong enough at density $1/2$, or
new game-specific methods beyond the offline Ramsey argument.

\subsection{Open problems}

We collect some open problems arising from this work.

\begin{enumerate}[label=(\roman*)]
\item \emph{The threshold $T(3,2)$.}
We have shown $5 \le T(3,2) \le 7$. Which value is correct?

\item \emph{Direct upper bounds for $q\ge 3$.}
General affine Ramsey theory gives upper bounds on $T(n,q)$ for every prime
power $q$, and quantitative estimates for vector-space Ramsey numbers provide
substantially sharper bounds than those obtained from the general
parameter-set theorem; see \cite{FredericksonYepremyan2025}.
It remains natural to ask for direct, game-specific upper bounds for
$T(n,q)$ when $q\ge 3$, analogous to the Fourier-analytic bounds obtained
here for $q=2$.

This is closely related to bounding the maximum size of subsets of $\F_q^m$
that contain no affine subspace of dimension~$n$.

\item \emph{Exact thresholds.}
Determine $T(n,q)$ exactly for further pairs $(n,q)$.
The only known exact values are $T(1,2)=T(1,3)=T(1,4)=2$ and $T(2,2)=4$.

\item \emph{Online affine Ramsey games.}
The Ramsey upper bounds used in this paper are offline: they rely on the fact
that every sufficiently large complete $2$-coloring of $\F_q^m$ contains a
monochromatic affine $n$-subspace. A natural related problem is to formulate
and study an online affine Ramsey game, in the spirit of Builder-Painter
online Ramsey games, where points or affine subspaces are revealed
sequentially and one asks how efficiently a monochromatic affine $n$-subspace
can be forced.

How do the corresponding online affine Ramsey numbers compare with the
thresholds $T(n,q)$ studied here?
\end{enumerate}

\section*{Acknowledgements}

The authors thank Eion Mulrenin for useful comments and suggestions, in
particular for pointing out the connection with affine and vector-space Ramsey
theorems and with online Ramsey games.

\section*{Declarations}

\paragraph{Conflicts of interest.}
The authors have no conflicts of interest to declare that are relevant to the
content of this article.

\bibliographystyle{plain}
\bibliography{references}

\end{document}